\def\YYint#1#2#3{{\setbox0=\hbox{$#1{#2#3}{\iint}$}
    \vcenter{\hbox{$#2#3$}}\kern-.51\wd0}}
\definecolor{br}{rgb}{1, 0.4,0}
\numberwithin{equation}{section}
\theoremstyle{plain}
\newtheorem{theorem}[equation]{Theorem}
\newtheorem{lemma}[equation]{Lemma}
\theoremstyle{definition}
\newtheorem{defn}[equation]{Definition}
\theoremstyle{remark}
\newtheorem{remarks}[equation]{Remarks}
\numberwithin{equation}{section}
\newcommand{\RR}{{\mathbb{R}}}
\newcommand{\NN}{{\mathbb{N}}}
\renewcommand{\emptyset}{\mbox{\textup{\O}}}
\DeclareMathOperator{\dist}{dist}
\DeclareMathOperator{\Real}{Re}
\DeclareMathOperator{\Imag}{Im}
\DeclareMathOperator{\graph}{graph}
\newcommand{\St}{\mathcal{S}}
\begin{document}

\allowdisplaybreaks

\title{Examples of non-Dini domains with large singular sets}

\author{Carlos Kenig}
\address{Carlos Kenig
\\ 
Department of Mathematics
\\
University of Chicago
\\
Chicago, IL 60637, USA}
\email{cek@math.uchicago.edu}

\author{Zihui Zhao}
\address{Zihui Zhao
\\ 
Department of Mathematics
\\
University of Chicago
\\
Chicago, IL 60637, USA}
\email{zhaozh@uchicago.edu}

\thanks{The first author was supported in part by NSF grants DMS-1800082 and DMS-2153794, and the second author was partially supported by NSF grant DMS-1902756.}
\subjclass[2010]{35J25, 42B37, 31B35.} 
\keywords{}

\vspace{2em}

\dedicatory{\vskip.5em Dedicated to David Jerison on his 70'th birthday}


\begin{abstract}
	Let $u$ be a non-trivial harmonic function in a domain $D\subset \RR^d$ which vanishes on an open set of the boundary.
	In a recent paper, we showed that if $D$ is a $C^1$-Dini domain, then within the open set the singular set of $u$, defined as $\{X\in \overline{D}: u(X) = 0 = |\nabla u(X)| \} $, has finite $(d-2)$-dimensional Hausdorff measure. In this paper, we show that the assumption of $C^1$-Dini domains is sharp, by constructing a large class of non-Dini (but almost Dini) domains whose \textit{singular sets} have infinite $\mathcal{H}^{d-2}$-measures.
\end{abstract}

\maketitle

\section{Introduction}

We consider the following question, which is inspired by a classical question asked by Bers (see the introduction in \cite{KZ}):
\begin{equation}
  \tag{Q}\label{Q}
  \parbox{\dimexpr\linewidth-6em}{%
    \strut
    Suppose $u$ is a non-trivial harmonic function in a domain $D$, and that $u=0$ on a relatively open set of the boundary $B_{2R}(0) \cap \partial D$. How big can the singular set $\St:= \{X\in B_R(0) \cap \partial D: \nabla u(X) = 0\}$ be?
    \strut
  }
\end{equation}
When $D$ is a $C^{1,1}$ domain, Lin proved that $\St$ has zero $(d-1)$-dimensional Hausdorff measure, and that $\St$ is a $(d-2)$-dimensional set, see \cite[Theorem 2.3]{Lin}.
Adolfsson, Escauriaza and Kenig \cite{AEK} (see also Kenig-Wang \cite{KW} for an alternative proof) extended the result to convex domains. This was then followed by works of Adolfsson-Escauriaza \cite{AE} and Kukavica-Nystr\"om \cite{KN}, who proved (using different methods) the result for $C^1$-Dini domains (see Definition \ref{def:Dini}). 
 Recently, Tolsa \cite{Tol} proved that for all $C^1$ domains (or Lipschitz domains with sufficiently small Lipschitz constant), the set $\St$ has zero $(d-1)$-dimensional Hausdorff measure.
 
In a recent work, we proved the following theorem:

\begin{theorem}[\cite{KZ}]\label{thm:KZ}
	Let $D$ be a $C^1$-Dini domain in $\RR^d$ (see Definition \ref{def:Dini}) with $0\in \partial D$, and let $R>0$. Suppose $u$ is a non-trivial harmonic function in $D\cap B_{50R}(0)$, and that $u=0$ on $\partial D \cap B_{50R}(0)$. Then the singular set
	\[ \St(u) := \{X\in \overline{D\cap B_{50R}(0)}: u(X) = 0 = |\nabla u(X)| \} \]
	satisfies that $\St(u)\cap B_R(0)$ is $(d-2)$-rectifiable, and
	\[ \mathcal{H}^{d-2}(\St(u) \cap B_R(0)) \leq C <+\infty, \]
	where the constant $C$ depends on $d, R$ and (the upper bound of) the frequency function of $u$ centered at $0$ with radius $50R$.
\end{theorem}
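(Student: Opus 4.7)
The plan is to adapt Almgren's frequency-function method, which Lin used to handle $C^{1,1}$ domains, to the weaker $C^1$-Dini setting and then feed the outcome into the Naber--Valtorta quantitative stratification machinery. First I would straighten the boundary near any $X_0 \in \partial D \cap B_R(0)$ by a $C^1$-Dini change of variables, so that the ball $B_r(X_0) \cap D$ is mapped to a half-ball and $u$ becomes a solution $\tilde u$ of a divergence-form equation $\operatorname{div}(A \nabla \tilde u) = 0$ with $\tilde u = 0$ on a flat boundary piece. Here $A$ is symmetric, uniformly elliptic, and has a Dini modulus of continuity inherited from the parametrization. Since the Dini integral $\int_0^1 \omega(s)/s\, ds$ is finite, the coefficients are close enough to constant for perturbative arguments.

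Next I would introduce the Almgren-type frequency
\[ N(r) = \frac{r \int_{B_r^+} \mu |\nabla \tilde u|^2\, dX}{\int_{\partial B_r^+} \mu |\tilde u|^2\, d\sigma}, \]
with $\mu$ built from $A$, and prove an almost-monotonicity statement: $r \mapsto e^{C \int_0^r \omega(s)/s\, ds} N(r)$ is monotone non-decreasing. This uses the standard Rellich/Pohozaev-type identities, where the $A - A(0)$ terms are absorbed through the Dini bound. As a consequence $N(0^+)$ exists at every boundary point, $N$ has a uniform upper bound (depending on the frequency at scale $50R$), and blow-ups along any subsequence $r_j \to 0$ subconverge to a homogeneous harmonic polynomial $P$ of integer degree $N(0^+) \ge 2$ that vanishes on the half-space; the singular set is then exactly the set where $N(0^+) \geq 2$.

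I would then stratify $\St(u)$ by the dimension of the spine of the tangent polynomial and observe that all strata live inside $\St^{d-2}$. To upgrade the classical stratification into a rectifiability plus Hausdorff measure estimate, I would run the Naber--Valtorta covering: at each scale and each quantitative singular point, the almost-monotonicity gives a frequency drop which, through an $L^2$-best-plane inequality, controls the Jones-type $\beta_2$ number
\[ \beta_2(x,r)^2 \;\lesssim\; \bigl(N(x,2r) - N(x,r/2)\bigr) + (\text{Dini error at scale } r). \]
Summing the frequency drops along dyadic scales produces a finite total $\beta_2^2$ mass, and the NV covering lemma converts this into both $\mathcal{H}^{d-2}(\St \cap B_R) < \infty$ and $(d-2)$-rectifiability.

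The step I expect to be the hardest is precisely this $\beta_2$-to-frequency-drop estimate in the Dini regime. In the $C^{1,1}$ case the straightening error is $O(r)$ and trivially summable; for Dini domains only the integral $\int_0^1 \omega(s)/s\, ds$ is finite, so the per-scale error does not decay geometrically. The technical heart of the proof is therefore to show that the cumulative Dini error is still compatible with the Naber--Valtorta covering, for instance by carefully reorganizing the sum over dyadic scales and using that $\sum_k \omega(2^{-k})$ converges. Once this is in place, the rest of the argument — uniqueness of tangents, quantitative cone-splitting, and the final packing estimate — follows the now-standard NV template.
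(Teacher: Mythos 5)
This theorem is not proved in the present paper: it is quoted as background from the authors' earlier work \cite{KZ}, and the body of this paper is devoted to the converse direction (constructing non-Dini domains with large singular sets). Your outline --- straightening by a Dini change of variables, almost-monotonicity of an Almgren-type frequency with the Dini error absorbed via $\int_0^* \theta(s)/s\,ds<\infty$, and then a Naber--Valtorta $\beta_2$/packing argument to get $(d-2)$-rectifiability and finite $\mathcal{H}^{d-2}$-measure --- is essentially the strategy actually carried out in \cite{KZ}, and you correctly single out the compatibility of the non-geometric Dini error with the covering scheme as the genuine technical heart.
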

In short, the theorem says that when $D$ is a $C^1$-Dini domain, the singular set at the interior and boundary, $\St(u) \cap B_R(0)$, is $(d-2)$-rectifiable, and its $(d-2)$-dimensional Hausdorff measure is finite. We remark that a similar result for convex domains can be found in \cite{McCurdy}.

It is natural to ask whether such fine estimate (i.e. $\mathcal{H}^{d-2}$ estimate) of the singular set can be extended to more general domains, for example Lipschitz domains with small constants. In that setting, recall Tolsa showed that $\St(u) \cap \partial D$ has surface measure zero in $B_R(0)$ (see \cite{Tol}). The answer is no in general, because if the domain is less regular than $C^1$-Dini, the gradient of harmonic functions which vanish on an open subset of the boundary may not exist everywhere in that open set, and thus it does not make sense to talk about its $\mathcal{H}^{d-2}$-measure. The goal of this paper is to give counterexamples which say that $C^1$-Dini domains are indeed the optimal class of domains for which Theorem \ref{thm:KZ} holds.
More precisely if $D$ is less regular than $C^1$-Dini and a harmonic function $u$ vanishes on an open subset of $\partial D$, in general we can not make sense of $\nabla u$ at the boundary. However, in the special case when $u$ is a \emph{non-negative} harmonic function in $D$ which vanishes in $\partial D \cap B_{2R}(0)$ with $0\in \partial D$, by the comparison principle (see Lemma \ref{lm:comparison}) $u$ is comparable to the Green's function $G$ in $D\cap B_R(0)$. Hence for $\sigma$-almost every $x\in \partial D \cap B_R(0)$ (where $\sigma:= \mathcal{H}^{d-1}|_{\partial D}$ denotes the boundary surface measure of $D$), we have
\[ \nabla u(x) \approx \partial_n G(x) \approx \frac{d\omega}{d\sigma}(x), \]
where $\partial_n G$ denotes the normal derivative of $G$ at the boundary, $d\omega/d\sigma$ denotes the Poisson kernel of the harmonic measure $\omega$ (whose pole is the same as that of the Green's function $G$). Since the upper and lower densities of the Radon measure $\omega$ are defined everywhere (and take values in $[0,+\infty]$), we can use the following set\footnote{Throughout the paper, we always use $\Delta_r(p)$ to denote a surface ball at the boundary, as in
\[ \Delta_r(p) := \partial D \cap B_r(p). \]}
\begin{align*}
	 \left\{p\in \partial D: \liminf_{r\to 0} \frac{\omega(\Delta_r(p))}{r} = \limsup_{r\to 0} \frac{\omega(\Delta_r(p))}{r} = 0 \right\}  
\end{align*} 
in place of the boundary singular set of $u$, namely $\{p\in \partial D: \nabla u(p) = 0\}$. Roughly speaking, we showed that when the domain $D$ barely fails to be $C^1$-Dini, the $\mathcal{H}^{d-2}$-measure of the above set could be infinite.

\begin{theorem}\label{thm:main}
	Given a monotone non-decreasing function $\theta: \RR_+ \to \RR_+$ which satisfies 
	\begin{equation}\label{cond:theta}
		\lim_{r\to 0+} \theta(r) = 0 \quad \text{ and } \quad \int_0^* \frac{\theta(r)}{r} \, dt = +\infty, 
	\end{equation} 
	there exist a $C^1$ function $\varphi: \RR\to \RR$ and a $C^1$ domain 
	\[ D := \{(x, t)\in \RR \times \RR: x\in \RR, t> \varphi(x) \} \]
	such that the following holds:
	\begin{itemize}
		\item there exists a bounded set $S\subset \RR$ containing infinitely (countably) many points, such that for each $x_0 \in S$ the modulus of continuity of $\nabla \varphi$ at $x_0$, denoted by $\alpha(r)$, satisfies
			\[ \theta(r) \leq \alpha(r) \leq \theta(4r); \]
		\item $\varphi \in C^2(\RR\setminus \overline{S})$;
		\item let $\omega$ denote the harmonic measure in $D$, we have that
			\[ \left\{p\in \partial D : \liminf_{r\to 0} \frac{\omega(\Delta_r(p))}{r} = \limsup_{r\to 0} \frac{\omega(\Delta_r(p))}{r} 
			= 0  \right\} \supset \graph_S \varphi, \]
			where $\graph_S \varphi := \{(x, \varphi(x): x\in S \}$.
	\end{itemize}
	In particular, the set 
	\[ \left\{p\in \partial D : \liminf_{r\to 0} \frac{\omega(\Delta_r(p))}{r} = \limsup_{r\to 0} \frac{\omega(\Delta_r(p))}{r} 
	= 0  \right\} \]
	is infinite.
\end{theorem}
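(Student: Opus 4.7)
I propose to prove the theorem by an explicit construction of $\varphi$ as a locally finite sum of rescaled bumps, one centered at each point of a countable set $S\subset[0,1]$, and then to establish the harmonic-measure vanishing at each $p_n=(x_n,\varphi(x_n))$ by reducing, via a Riemann map to the upper half-plane, to a Hilbert-transform computation on the tangent-angle function of $\partial D$.

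\emph{Construction of $\varphi$.} I first smooth $\theta$ to an auxiliary $C^1$ function $\tilde\theta$ with $\theta(r)\leq\tilde\theta(r)\leq\theta(2r)$ (still satisfying \eqref{cond:theta}) and build a model bump $\psi\in C^1(\RR)\cap C^2(\RR\setminus\{0\})$ supported in $[-1,1]$ with $\psi(0)=\psi'(0)=0$, $\psi'(x)=\mathrm{sign}(x)\tilde\theta(|x|)$ near the origin, and a strict local minimum at $0$. Then I pick $\{x_n\}_{n\geq 1}\subset[0,1]$ and scale parameters $\lambda_n, r_n\downarrow 0$ rapidly enough that the supports $[x_n-r_n,x_n+r_n]$ are pairwise disjoint and $\sum_n \lambda_n/r_n$-type tails are controlled, then set
\[ \varphi(x):=\sum_n \lambda_n\,\psi\!\left(\tfrac{x-x_n}{r_n}\right).\]
Disjoint supports and summable $C^1$ norms give $\varphi\in C^1(\RR)\cap C^2(\RR\setminus\overline{S})$ with $S=\{x_n\}$. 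By scaling, the modulus of continuity of $\varphi'$ at $x_n$ is $\alpha(r)\asymp(\lambda_n/r_n)\tilde\theta(r/r_n)$ on $r\leq r_n$; choosing $\lambda_n/r_n$ so this matches $\theta$ on the relevant scale yields the sandwich $\theta(r)\leq\alpha(r)\leq\theta(4r)$, the factor $4$ absorbing the rescaling constants coming from $\tilde\theta$ versus $\theta$.

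\emph{Conformal reduction and Hilbert-transform estimate.} Since $D$ is simply connected in $\RR^2\cong\mathbb{C}$, choose a Riemann map $F:\mathbb{H}\to D$ with $\mathbb{H}=\{\Im z>0\}$ and pole $X_0\in D$, and let $z_0:=F^{-1}(X_0)$, $\xi_n:=F^{-1}(p_n)\in\RR$. Because $\|\varphi'\|_\infty$ is small and $\partial D$ is a $C^1$ graph, $F$ extends homeomorphically to $\overline{\mathbb H}\to\overline D$, the horizontal projection $\xi\mapsto F(\xi)_1$ is locally bi-Lipschitz near $\xi_n$, and $\omega^{X_0}=F_\ast\omega_\mathbb{H}^{z_0}$ has Radon--Nikodym density $P_\mathbb{H}(z_0,\xi)/|F'(\xi)|$ with respect to arc length. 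Since $\sigma(\Delta_r(p_n))\asymp 2r$, the desired $\omega(\Delta_r(p_n))/r\to 0$ reduces to showing $|F'(\xi)|\to+\infty$ as $\xi\to\xi_n$. Now $\log F'$ is holomorphic on $\mathbb{H}$ with imaginary boundary value $\arctan\varphi'(F(\cdot)_1)$, and the Schwarz integral representation gives $\log|F'(\xi_n)| = H[\arg F'](\xi_n)+O(1)$, where $H$ is the Hilbert transform. Pulling back through the bi-Lipschitz boundary parametrization, $\arg F'(\xi)\approx \mathrm{sign}(\xi-\xi_n)(\lambda_n/r_n)\tilde\theta(|\xi-\xi_n|/r_n)$ near $\xi_n$, odd about $\xi_n$ to leading order, so the local principal-value integral at $\xi_n$ is
\[
2\int_0^{cr_n}\frac{(\lambda_n/r_n)\tilde\theta(s/r_n)}{s}\,ds=\frac{2\lambda_n}{r_n}\int_0^{c}\frac{\tilde\theta(u)}{u}\,du=+\infty
\]
by \eqref{cond:theta}. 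Contributions from the remaining bumps and from the far tail of $\arg F'$ are uniformly bounded provided the scales are chosen with summable weighted tails. Hence $\log|F'(\xi_n)|=+\infty$, the Poisson-kernel density vanishes at $p_n$, and $\omega(\Delta_r(p_n))/r\to 0$, which is the required inclusion $\mathrm{graph}_S\varphi$ in the density-zero set.

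The main technical obstacle is the last step: the Hilbert transform is non-local, so one must verify that the divergent local contribution at $\xi_n$ is not cancelled by the finite but possibly signed far-field contributions from all the other bumps, and that the blow-up of $|F'|$ is symmetric enough in both $\liminf$ and $\limsup$ to yield the full limit being zero. This balance is where the freedom in choosing the scales $\lambda_n,r_n$ is essential: they must be small enough that $\sum_{k\neq n}(\lambda_k/r_k)\cdot|x_k-x_n|^{-1}$-type sums converge, yet not so small that the local moduli of continuity at each $x_n$ fall outside the prescribed sandwich $\theta(r)\leq\alpha(r)\leq\theta(4r)$. A related subtlety is that $F\in C^1$ up to the boundary exactly fails at $F^{-1}(S)$, so one cannot appeal to continuity of $F'$ on $\overline{\mathbb H}$ near those points and must instead use quantitative two-sided estimates on $|F'|$ derived from the Schwarz integral.
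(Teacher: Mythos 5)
Your construction runs in the opposite direction from the paper's, and this is where the gap lies. The paper does not start from an explicit graph $\varphi$ and then analyze its Riemann map; it prescribes the tangent-angle function $f(x)=c\sum a_k\tilde H(x-x_k)$ on $\partial\RR^2_+$ (where $\tilde H$ is a smoothed Heaviside function whose modulus of continuity at $0$ is sandwiched by $\theta$), sets $G=\exp(-W+iV)$ with $V,W$ the Poisson extensions of $f$ and $Kf$, and \emph{defines} the domain as $D=\Phi(\RR^2_+)$ with $\Phi'=G$. In that coordinate the boundary behavior of $|\Phi'|=\exp(-Kf)$ is an explicit Hilbert-transform computation, and the divergence $\int_0\theta(r)r^{-1}\,dr=+\infty$ enters directly. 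In your forward approach the same computation must be performed for $\arg F'$ expressed in the half-plane variable $\xi$, and your reduction hinges on the claim that the boundary correspondence $\xi\mapsto F(\xi)_1$ is locally bi-Lipschitz near $\xi_n$. That claim fails precisely at the points you care about: the conclusion you are trying to prove is $|F'|\to+\infty$ at $\xi_n$, which means $F$ is \emph{not} Lipschitz there, so the substitution turning $\int_0^{c r_n}\tilde\theta(|F(s)_1-x_n|/r_n)\,|s-\xi_n|^{-1}\,ds$ into $\int_0^{c}\tilde\theta(u)u^{-1}\,du$ is circular. The measure $ds/s$ is scale-invariant but not invariant under the (a priori unknown, non-bi-Lipschitz) boundary homeomorphism, so the divergence of the Dini integral in the $x$-variable does not transfer to the $\xi$-variable without first controlling $F'$ — which is the thing being proved. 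This is the concrete reason the paper builds the conformal map first and reads off the domain afterwards.

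A second, independent gap: even granting $\log|F'(\xi_n)|=+\infty$ pointwise, this does not yield $\omega(\Delta_r(p_n))/r\to 0$. Since $\omega(\Delta)/\sigma(\Delta)=|F^{-1}(\Delta)|\big/\int_{F^{-1}(\Delta)}|F'|$, what is needed is the averaged two-sided statement: $|F'|$ must be locally integrable near $\xi_n$ (so that $F$ extends continuously and small surface balls have finite, comparable-to-$r$ arc length, which is also needed for the Ahlfors regularity you invoke via $\sigma(\Delta_r)\asymp r$), and simultaneously $\fint_I|F'|\to+\infty$ for every interval $I\ni\xi_n$ shrinking to $\xi_n$ — for both one-sided and two-sided intervals, to get both the $\liminf$ and the $\limsup$ equal to $0$. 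The paper spends most of Section 4 on exactly these two bounds (the upper bound $\int_0^\epsilon\exp(-ca_kK\tilde H)\lesssim\epsilon^{\beta'}$, which uses $ca_k<\pi/2$ and a Helson--Szeg\H{o}/$A_2$-type mechanism, and the lower bound $\frac1\epsilon\int_0^\epsilon\exp(-ca_kK\tilde H)\to+\infty$, which uses the divergence of the Dini integral quantitatively through dyadic sums of $\tilde\theta(2^{-i})$). Your proposal asserts the conclusion from the pointwise blow-up and defers the far-field cancellation issue, but the averaged upper and lower bounds are the core of the argument, not a technicality. I would also note that your sandwich $\theta(r)\le\alpha(r)\le\theta(4r)$ is in tension with the normalization $\lambda_n/r_n\to0$ forced by $C^1$-regularity at accumulation points of $S$; the paper faces the analogous issue with the coefficients $ca_k\to0$ and obtains the sandwich only for the model function $\tilde H$, so if you keep your approach you should at least match that level of precision.
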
 
\begin{remarks}
	\begin{itemize}
		\item We can easily extend the above example of planar domains to domains in $\RR^d$, by considering the domains $\tilde{D}: = D \times \RR^{d-2} \subset \RR^d$. In this case, the \textit{singular} set in the boundary of $\tilde{D}$ is equal to $\graph_S \varphi \times \RR^{d-2}$ and has infinite $\mathcal{H}^{d-2}$-measure.
		\item In the proof we will also show that whenever $x\ll 0$ and $x\gg 0$, the function $\varphi(x)$ is linear, and thus the boundary $\partial D = \graph_{\RR} \varphi$ is flat when we are sufficiently far from the center $(0, \varphi(0))$. Therefore it is not hard to close off $D$ so it becomes a bounded domain, and at the same time maintain that $\partial D$ has $C^2$ regularity except for points in $\graph_S \varphi$. 
			Thus the result also holds for bounded domains.
	\end{itemize}	
\end{remarks}

We remark that when $D$ is a $C^1$-Dini domain and the harmonic function $u$ in consideration is non-negative, the Hopf maximum principle implies that 
\[ |\nabla u| = \partial_\nu u \geq c >0 \text{ at the boundary}, \]
where $\partial_\nu u$ denotes the normal derivative of $u$ with the normal vector $\nu$ pointing inwards, see \cite{CK}. (The Hopf maximum principle in \cite{CK} was proven for solutions to parabolic equations, but by taking a slice at a fixed positive time the elliptic analogue follows.) In particular, this implies that for $C^1$-Dini domains, the \textit{singular set}
\[ \left\{p\in \partial D: \liminf_{r\to 0} \frac{\omega(\Delta_r(p))}{r} = \limsup_{r\to 0} \frac{\omega(\Delta_r(p))}{r} 
= 0 \right\} = \emptyset. \]

In a related work \cite[Section 9]{Josep}, the author constructed the following example (credited to Tolsa): there exist Lipschitz domains $D\subset \RR^2$ with small constants such that the \textit{singular set} at the boundary 
\[ \left\{p\in \partial D: \lim_{r\to 0} \frac{\omega(\Delta_r(p))}{r} \text{ exists and is equal to } 0 \right\} \]
has Hausdorff dimension as close to $1$ as we want. In particular, it indicates that for Lipschitz domains, one can not expect a better answer to \eqref{Q} than saying that the singular set has zero surface measure. (For comparison, our examples show that in order to obtain the sharp $(d-2)$-dimensional estimate for \eqref{Q} as in \cite{KZ}, the assumption of $C^1$-Dini domains can not be weakened.)
 These Lipschitz domains are built by taking the union of cones with vertices at a fat Cantor set, whose Hausdorff dimension can be chosen sufficiently close to $1$. The purpose of their example is similar to ours, but the constructions are completely different. Besides, our example of domains are better than $C^1$-regular, instead of just Lipschitz, but the \textit{singular set} is $0$-dimensional (albeit infinite), rather than $(1-\epsilon)$-dimensional.

The proof of Theorem \ref{thm:main} is inspired by the work of the first named author \cite{Kenig}. There it was shown that one can construct a Lipschitz domain in $\RR^2$ with prescribed tangent vectors on its boundary, such that its harmonic measure is given by the exponential of the Hilbert transform of that prescribed function, see for example \cite[Lemma 1.11]{Kenig}.
The plan of the paper is as follows. We recall some definitions and preliminary results in Section \ref{sec:prelim}. Then to fix ideas, we first construct Lipschitz domains with the desired properties in Section \ref{sec:Lip}. These domains are explicit and not difficult to visualize. In Section \ref{sec:C1}, we construct the desired $C^1$ domains for every modulus of continuity $\theta$ satisfying \eqref{cond:theta}.
\bigskip

\textbf{Acknowledgement.} Remarks from C.K.: I have known David for 45 years, as a close collaborator and a dear friend. When our collaboration began, at the start of our careers, it brought us great excitement, and it greatly helped to launch my professional path. Our close friendship, through both happy times and very difficult ones, has been a joy. Thank you David! 
From Z.Z.: I have been greatly influenced and inspired by Prof. Jerison's work, especially how he makes connections between different fields of math. For that I am always grateful. Happy birthday, Prof. Jerison!

\section{Preliminaries}\label{sec:prelim}
\begin{defn}[Dini domains]\label{def:Dini}
	Let $\theta: \RR \to \RR$ be a non-decreasing function which satisfies $\lim_{r\to 0+} \theta(r) = 0$ and
	\[
		\int_0^* \frac{\theta(r)}{r} < \infty. 
	\]
	A connected domain $D$ in $\RR^d$ is a \textit{Dini domain} with parameter $\theta$ if for each point $X_0 \in \partial D$ there is a coordinate system $X=(x,x_d), x\in \RR^{d-1}, x_d\in \RR$ such that $X_0=(0,0)$ with respect to this coordinate system, and there are a ball $B$ centered at $X_0$ and a Lipschitz function $\varphi: \RR^{d-1} \to \RR$ verifying the following
	\begin{enumerate}
		\item $\|\nabla \varphi\|_{L^\infty(\RR^{d-1})} \leq C_0$ for some $C_0>0$;
		\item $|\nabla \varphi(x)- \nabla \varphi(y)| \leq \theta(|x-y|)$ for all $x,y \in \RR^{d-1}$;
		\item $D\cap B=\{(x,x_d)\in B: x_d > \varphi(x) \}$.
	\end{enumerate}
\end{defn}

The following integral will be used repeatedly in the computation of Hilbert transforms, so we state it as a lemma here.
\begin{lemma}\label{lm:prelim}
	Let $a<b$ be two real numbers. Suppose $x \notin [a,b]$, we have that
	\begin{equation}
		\int_a^b \frac{1}{x-y} \, dy = \log|x-a| - \log|x-b|.
	\end{equation}
\end{lemma}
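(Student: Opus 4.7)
The plan is to reduce the identity to the fundamental theorem of calculus applied to the antiderivative $\log|u|$ of $1/u$, after verifying that the integrand is continuous on the interval of integration. The key point that must be addressed is that $\log|x-y|$ is only an antiderivative of $1/(x-y)$ on intervals that avoid the singularity at $y = x$; the hypothesis $x \notin [a,b]$ is exactly what guarantees we stay on one side of this singularity.

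First I would perform the substitution $u = x-y$, $du = -dy$, transforming the integral into
\[ \int_a^b \frac{dy}{x-y} = -\int_{x-a}^{x-b} \frac{du}{u} = \int_{x-b}^{x-a} \frac{du}{u}. \]
Next I would split into the two cases dictated by the hypothesis $x \notin [a,b]$: either $x > b$, in which case $0 < x-b < x-a$, or $x < a$, in which case $x-b < x-a < 0$. In both cases the endpoints $x-a$ and $x-b$ have the same sign, so the interval of integration is contained in $(0,\infty)$ or in $(-\infty,0)$, and the function $1/u$ is continuous there.

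Finally, using $\frac{d}{du}\log|u| = 1/u$ on each of the half-lines $(0,\infty)$ and $(-\infty,0)$, the fundamental theorem of calculus gives
\[ \int_{x-b}^{x-a} \frac{du}{u} = \log|x-a| - \log|x-b|, \]
which is the claimed identity. The only genuine subtlety is the sign bookkeeping when $x < a$, where both $x-a$ and $x-b$ are negative, but since we use the antiderivative $\log|u|$ rather than $\log u$ this is automatic; no main obstacle arises.
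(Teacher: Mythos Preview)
Your proof is correct and follows essentially the same approach as the paper: a linear change of variables followed by the fundamental theorem of calculus applied to $\log|\cdot|$, with the case split $x<a$ versus $x>b$ ensuring the integrand has no singularity on the interval. The only cosmetic difference is that the paper chooses a separate substitution in each case (namely $z=y-x$ when $x<a$ and $z=x-y$ when $x>b$) so that the new variable is always positive, whereas you use the single substitution $u=x-y$ and absorb the sign issue into the absolute value in $\log|u|$.
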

\begin{proof}
	When $x<a<y$, let $z:=y-x>0$. By a change of variables, we have
	\[ \int_a^b \frac{1}{x-y} \, dy = \int_{a-x}^{b-x} -\frac{1}{z} \, dz = \log(a-x) - \log (b-x). \]
	When $x>b>y$, let $z=x-y>0$. By a change of variables, we have
	\[ \int_a^b \frac{1}{x-y} \, dy = \int_{x-a}^{x-b} \frac{1}{z} \, -dz = \int_{x-b }^{x-a} \frac{1}{z} \, dz = \log (x-a)-\log (x-b). \]
\end{proof}

We recall the following lemmas about positive solutions to elliptic PDEs (for a reference see \cite{CBMS}).
\begin{lemma}[Comparison principle]\label{lm:comparison}
	Let $D$ be a Lipschitz domain and $p\in \partial D$, $r>0$.
	Let $u, v\geq 0$ be two non-trivial harmonic functions in $D \cap B_{2r}(p)$ such that $u=v=0$ on $\Delta_{2r}(p) := \partial D \cap B_{2r}(p)$. Then for any $X\in D \cap B_r(p)$,
	\[ C^{-1} \, \frac{u}{v} (A_r(p)) \leq \frac{u}{v}(X) \leq C \, \frac{u}{v} (A_r(p)), \]
	where $C\geq 1$ is a universal constant, and $A_r(p)$ denotes the corkscrew point in $D\cap B_r(p)$, that is to say, there exists a constant $M>0$ only depending on the Lipschitz constant of the domain, such that $B_{r/M}(A_r(p)) \subset D\cap B_r(p)$.
\end{lemma}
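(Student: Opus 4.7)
The plan is to sandwich both $u$ and $v$ between constant multiples of a single benchmark positive harmonic function that vanishes on $\Delta_{2r}(p)$, and then divide. A natural benchmark is the Green's function $G(\cdot)=G_D(\cdot,X_0)$ with pole at a deep corkscrew point $X_0:=A_{2r}(p)$, which is harmonic in $D\cap B_{3r/2}(p)$, non-negative, and vanishes on $\Delta_{2r}(p)$. By scaling I reduce to $r=1$, $p=0$, and after normalizing $u(A_1(0))=v(A_1(0))=1$ it suffices to establish the two-sided estimate
\[ c\,\frac{G(X)}{G(A_1(0))}\;\leq\;u(X)\;\leq\;C\,\frac{G(X)}{G(A_1(0))} \qquad \text{for all } X\in D\cap B_1(0), \]
and the identical estimate for $v$. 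Dividing then yields $u(X)/v(X)\sim 1$, which combined with the normalization is exactly the asserted comparison.

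For the upper bound I would invoke Carleson's estimate: in a Lipschitz domain, any non-negative harmonic function in $D\cap B_2(0)$ vanishing on $\Delta_2(0)$ satisfies $\sup_{D\cap B_{3/2}(0)} u\leq C\,u(A_1(0))$. My approach is the classical one: iterate the maximum principle against a barrier built from the exterior corkscrew condition (which is automatic for Lipschitz domains), and use interior Harnack chains of controlled length to transfer the bound to $A_1(0)$. Having this sup bound, the upper half of the sandwich follows from the maximum principle applied to $u - C'\,G/G(A_1(0))$ in $D\cap B_{3/2}(0)$: this function is $\leq 0$ on $\Delta_{3/2}(0)$ (both terms vanish there) and $\leq 0$ on $D\cap\partial B_{3/2}(0)$ provided $C'$ is chosen large enough, since on that interior sphere $G$ is bounded below by a universal multiple of $G(A_1(0))$ via a Harnack chain from $A_2(0)$ through $A_1(0)$.

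The lower bound is symmetric. The function $u - c\,G/G(A_1(0))$ vanishes on $\Delta_{3/2}(0)$, and is $\geq 0$ on $D\cap \partial B_{3/2}(0)$ for sufficiently small $c$: on that set, $u$ is bounded below by a Harnack constant times $u(A_1(0))=1$, while $G$ is bounded above there by Carleson applied to $G$ (now $G$ plays the role of the non-negative harmonic function vanishing on $\Delta_{2}(0)$, with its value at $A_1(0)$ serving as the normalization). The maximum principle then gives $u\geq c\,G/G(A_1(0))$ throughout $D\cap B_{3/2}(0)$, in particular in $D\cap B_1(0)$. Running the same argument for $v$ and combining completes the proof.

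The principal technical obstacle is Carleson's estimate itself, which is where the Lipschitz geometry of $\partial D$ enters in an essential way: it is the quantitative non-tangential access to every boundary point (both the exterior corkscrew condition, for the barrier, and the interior corkscrew condition, for the Harnack chain to $A_1(0)$) that allows the pointwise bound to be controlled by the single value $u(A_1(0))$. Everything else in the argument is a maximum principle comparison and interior Harnack, and the constant $C$ in the statement inherits its dependence on the Lipschitz character precisely through the constant $M$ appearing in the corkscrew condition.
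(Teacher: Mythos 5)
The paper does not actually prove this lemma --- it is recalled from the reference [CBMS] --- so your proposal has to stand on its own. Its skeleton (sandwich $u$ and $v$ against a common benchmark vanishing on $\Delta_{2r}(p)$, via Carleson and the maximum principle) is the right one, but both maximum-principle steps as written have the same genuine gap: you treat $D\cap\partial B_{3/2}(0)$ as if it were an ``interior sphere'' on which $G$ (for the upper bound) and $u$ (for the lower bound) are bounded below by positive constants via Harnack chains. That set is not compactly contained in $D$; it contains points arbitrarily close to $\partial D\cap \partial B_{3/2}(0)$, where both $u$ and $G$ tend to zero (they vanish continuously on $\Delta_2(0)$). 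Harnack chains to such points have unbounded length, so neither $\inf_{D\cap\partial B_{3/2}(0)}G>0$ nor $\inf_{D\cap\partial B_{3/2}(0)}u\geq c\,u(A_1(0))$ holds, and the comparison functions $u-C'G/G(A_1(0))$ and $u-cG/G(A_1(0))$ are not sign-definite on the full boundary of the comparison region. This is precisely the point at which the boundary comparison principle is a nontrivial theorem; if the naive sandwich closed, the result would hold with no geometric hypothesis on $\partial D$ at all. (A secondary issue: the pole $A_2(0)$ of your benchmark may itself lie inside $B_{3/2}(0)$, where $G$ blows up and the lower inequality $u\geq cG$ must fail near the pole.)

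The standard repair, which is what [CBMS] does, is asymmetric in the two bounds. For the lower bound one uses the Green's function of the \emph{truncated} domain $D\cap B_{3/2}(0)$ with pole at $A_1(0)$: it vanishes on the spherical part of the boundary, so the maximum principle is run in $D\cap B_{3/2}(0)\setminus \overline{B_{1/(2M)}(A_1(0))}$, with Harnack giving $u\geq c\,u(A_1(0))$ and the elementary bound $G(X,Y)\leq C|X-Y|^{2-d}$ controlling $G$ from above on the small sphere around the pole. For the upper bound one goes through harmonic measure rather than the Green's function directly: $u(X)\leq \bigl(\sup_{D\cap\partial B_{3/2}(0)}u\bigr)\,\omega^{X}_{D\cap B_{3/2}(0)}\bigl(D\cap\partial B_{3/2}(0)\bigr)$, Carleson bounds the supremum by $C\,u(A_1(0))$, and Bourgain's lemma ($\omega^{Y}(\Delta_2(0))\geq c$ for every $Y\in D\cap B_{3/2}(0)$, a quantitative consequence of the Lipschitz geometry) together with the maximum principle converts the harmonic measure of the spherical cap into $C\,\omega^{X}(\Delta_2(0))$, which is then compared to the truncated Green's function by the CFMS estimate (the paper's Lemma \ref{lm:CFMS}, localized). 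The Bourgain/harmonic-measure step is the ingredient your argument is missing.
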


\begin{lemma}\label{lm:CFMS}
	Let $D$ be a Lipschitz domain in $\RR^d$. For any $X\in D$, let $G(X, \cdot)$ denote the Green's function for the Laplacian in $D$ with pole at $X$, and let $\omega^X$ denote the corresponding harmonic measure. For any $p\in \partial D$ and $r>0$ such that $X \notin B_{2r}(p)$, we have that
	\[ \frac{ \omega^X(\Delta_r(p))}{r^{d-1}} \approx \frac{G(X, A_r(p))}{r} \]
	where $\approx$ means that the two quantities are equivalent modulo two universal constants.
\end{lemma}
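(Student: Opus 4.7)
The plan is to apply the boundary comparison principle (Lemma~\ref{lm:comparison}) to the pair of positive harmonic functions
\[
u(Y) := \omega^Y(\Delta_r(p)), \qquad v(Y) := G(Y, A_r(p)),
\]
on the exterior region $D^\star := D \setminus \overline{B_{2r}(p)}$, and then to evaluate their ratio at a single convenient corkscrew reference point. Both $u$ and $v$ are positive and harmonic in $D^\star$ --- $v$ because its pole $A_r(p)$ lies in $B_r(p) \subset B_{2r}(p)$ and thus outside $D^\star$, and $u$ because it is harmonic on all of $D$ --- and both vanish continuously on the common boundary piece $\partial D \setminus \overline{B_{2r}(p)}$ ($u$ since $\Delta_r(p) \subset \overline{B_{2r}(p)}$, and $v$ on all of $\partial D$).

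Applying Lemma~\ref{lm:comparison} at each boundary point $q \in \partial D \setminus \overline{B_{3r}(p)}$ at a scale proportional to $\dist(q, \overline{B_{2r}(p)})$, and chaining the resulting local ratio estimates by interior Harnack inequalities in $D^\star$ (the standard passage from local to global boundary Harnack in Lipschitz domains), one obtains
\[
\frac{u(Y_1)}{v(Y_1)} \approx \frac{u(Y_2)}{v(Y_2)} \quad \text{for all } Y_1, Y_2 \in D \setminus \overline{B_{3r}(p)},
\]
with comparability constants depending only on the Lipschitz character of $D$.

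I next evaluate the ratio at the reference point $Y_0 := A_{3r}(p)$. The numerator $\omega^{A_{3r}(p)}(\Delta_r(p))$ is bounded above by $1$ and below by a positive constant $c_0$: this is a Bourgain-type non-degeneracy estimate, proved in Lipschitz domains from an explicit Lipschitz barrier at $p$ together with Lemma~\ref{lm:comparison}. The denominator $G(A_{3r}(p), A_r(p))$ is comparable to $r^{2-d}$: both corkscrews lie at mutual distance and distance to $\partial D$ comparable to $r$, so a bounded interior Harnack chain reduces the claim to the standard pointwise asymptotic $G(Y,Z) \approx |Y-Z|^{2-d}$ valid near the pole when $|Y-Z|$ is comparable to the boundary distance. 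Combining gives $u(Y_0)/v(Y_0) \approx r^{d-2}$, hence
\[
\omega^X(\Delta_r(p)) = u(X) \approx r^{d-2} v(X) = r^{d-2}\, G(X, A_r(p)) \quad \text{for every } X \in D \setminus \overline{B_{3r}(p)}.
\]
The residual case $X \in B_{3r}(p) \setminus \overline{B_{2r}(p)}$ is handled by one further interior Harnack chain connecting $X$ to a fixed point in $D \setminus \overline{B_{3r}(p)}$, which is legitimate because neither $u$ nor $v$ has a pole in this annular shell.

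The main obstacle is making the chained boundary Harnack step uniform, that is, propagating the local statement of Lemma~\ref{lm:comparison} to the global ratio comparison used above, with constants independent of $r$ and $p$. This is standard in the Lipschitz/NTA theory but requires a Whitney decomposition of $D^\star$ and careful bookkeeping of Harnack-chain lengths. Once that is in place, the remaining inputs --- the Bourgain lower bound for the numerator and the Newtonian-potential asymptotic for $G$ in the denominator --- are classical barrier and Green's function estimates in Lipschitz domains.
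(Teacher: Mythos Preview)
The paper does not prove this lemma; it is stated as a known result with a reference to \cite{CBMS}. Your outline is essentially the standard CFMS argument and is correct in its broad strokes: apply the (global) boundary Harnack principle to $\omega^{\,\cdot}(\Delta_r(p))$ and $G(\cdot,A_r(p))$ on $D\setminus \overline{B_{2r}(p)}$, then compute the ratio at a single corkscrew point using Bourgain's lower bound and the local size of the Green's function. The step you flag as ``the main obstacle'' --- passing from the local comparison principle in Lemma~\ref{lm:comparison} to a global ratio estimate on the exterior region --- is indeed the substantive content, and is precisely what is carried out in \cite{CBMS}.

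One small caveat: in dimension $d=2$ the near-pole asymptotic $G(Y,Z)\approx |Y-Z|^{2-d}$ is not literally correct (the Green's function is logarithmic there), so your justification of $G(A_{3r}(p),A_r(p))\approx r^{2-d}$ needs a different argument in the planar case. The conclusion $G(A_{3r}(p),A_r(p))\approx 1$ still holds, by a direct barrier/scaling argument, so the lemma is unaffected --- but since the paper ultimately applies this lemma with $d=2$, you should not leave that case to the $|Y-Z|^{2-d}$ heuristic.
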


\section{Lipschitz domains}\label{sec:Lip}
Let $H: \mathbb{R} \to \mathbb{R}$ be the Heaviside step function, namely
\[ H(x) = \left\{\begin{array}{ll}
	0, & x\leq 0 \\
	1, & x > 0
\end{array} \right. \]
Let $\{x_k\}$ be a sequence of distinct points in $\mathbb{R}\setminus\{0\} $ such that $x_k \to 0$. (Then in particular $\{x_k\}$ is bounded, say $|x_k| \leq 1$.) Let $c$ be a positive real number, and $\{a_k\}$ be a sequence in $\mathbb{R}_+$ such that
\begin{equation}\label{eq:akxk}
	c':= c \sum a_k < \frac{\pi}{2}. 
\end{equation} 
We define a function $f: \mathbb{R} \to \mathbb{R}$ as
\[ f(x) = c\sum a_k H(x-x_k). \]
Clearly
\[ \|f\|_{L^\infty} = c\sum a_k = c' < \frac{\pi}{2}. \]

Let $K$ be the Hilbert transform operator, modulo a constant, defined as
\begin{equation}\label{def:Kf}
	Kh(x) := \lim_{\epsilon \to 0} ~\frac{1}{\pi} \int h(y) \left[\frac{\chi_{|x-y|\geq\epsilon}}{x-y}  + \frac{\chi_{|y|>1}}{y}  \right] dy, 
\end{equation} 
whenever the limit on the right hand side exists. Here $\chi_E$ denotes the characteristic function of the set $E$. Recall that the Hilbert transform maps $L^\infty(\mathbb{R})$ functions into functions in the BMO space. Simple computations show that
\[ K\left(H(\cdot - x_k)\right)(x) = KH(x-x_k) = \frac{1}{\pi} \log|x-x_k|, \]
and hence, formally, we have
\[ Kf(x) = \frac{c}{\pi} \sum a_k \log|x-x_k|. \]
In fact, by the assumption \eqref{eq:akxk}, we have that
\[ c \sum_{k \leq \ell} a_k H(x-x_k) \to f(x) \text{ in } L^\infty(\RR), \quad \text{ as } \ell \to +\infty. \]
Hence
\begin{equation}\label{tmp:KBMOlimit}
	Kf(x) = \lim_{\ell \to +\infty} K\left(c \sum_{k\leq \ell} a_k H(x-x_k) \right) = \lim_{\ell \to +\infty} \frac{c}{\pi} \sum_{k \leq \ell} a_k \log|x-x_k|,  
\end{equation} 
where the limit is taken in the BMO space.

The function on the right hand side of \eqref{tmp:KBMOlimit} is well-defined and continuous in $\mathbb{R}\setminus\{0\}$. 
In fact, assume that $x\neq 0$ and $x\neq x_k$ for every $k$. 
Since $\log|\cdot|$ is a continuous function in $\RR \setminus \{0\}$, it is uniformly continuous on compact subset of $\RR \setminus\{0\}$. Let $E$ be a compact subset of $\RR \setminus\{0\}$ containing $x$, such that $E \cap \{x_k: k\in \NN\} = \emptyset$. We have that for any $y\in E$
\[ \log|y-x_k| \to \log |y|, \quad \text{ as } x_k \to 0, \]
and the convergence is uniform.
In particular, there exists $k_0\in \NN$ depending on $E$, such that for any $k \geq k_0$ and $y\in E$, we have that
\[ \left| \log |y-x_k| \right| \leq \left| \log|y| \right| + 1. \]
Hence for any $m\geq \ell \geq k_0$, we have
\begin{equation}\label{tmp:Linftysmall}
	\left| \frac{c}{\pi} \sum_{k=\ell}^{m} a_k \log|y-x_k| \right| \leq  
	\left( c\sum_{k = \ell}^{m} a_k \right) \cdot \left( \left| \log|y| \right| + 1 \right)<+\infty. 
\end{equation} 
Therefore
\[ \frac{c}{\pi} \sum a_k \log|x-x_k| = \lim_{\ell \to +\infty} \frac{c}{\pi} \sum_{k \leq \ell} a_k \log|x-x_k| \]
is well-defined and continuous in $E$. 
Therefore we have shown that $Kf(x)$ is well-defined and continuous on $\RR \setminus\left(\{x_k\} \cup \{0\} \right)$. Also, we have
\begin{equation}\label{tmp:loggrowth}
	|Kf(x)| \leq \frac{\pi}{2} \log\left(|x|+1\right), \quad \text{ whenever } |x|\geq 2. 
\end{equation}

Moreover near each $x_k$, we have
\begin{align}
	Kf(x) & = \frac{c}{\pi} a_k \log|x-x_k| + \lim_{k_0\to +\infty} \frac{c}{\pi} \sum_{k'\leq k_0 \atop{k' \neq k} } a_{k'} \log|x-x_{k'}| \nonumber \\
	& =:  \frac{c}{\pi} a_k \log|x-x_k| + e_k(x).\label{eq:Kfnearxk}
\end{align}  
Since $x_{k'} \neq x_k$ for every $k'\neq k$, and $x_{k'} \to 0 \neq x_k$, we have that
\begin{equation}\label{def:dk}
	\delta_k := \inf\{ |x_{k'}-x_k|: k'\in \NN \text{ and } k'\neq k \}>0. 
\end{equation} 
Then as long as $0<|x-x_k|< \delta_k/2$, 
\begin{align*}
	\frac{c}{\pi} \sum_{k' \neq k } a_{k'} \left| \log|x-x_{k'}| \right| & = \frac{c}{\pi} \sum_{k' \neq k \atop{|x-x_{k'}|\leq 1 } } a_{k'} \left| \log|x-x_{k'}|\right| + \frac{c}{\pi} \sum_{k' \neq k \atop{|x-x_{k'}|> 1 } } a_{k'} \left| \log|x-x_{k'}|\right| \\
	& = \frac{c}{\pi} \sum_{k' \neq k \atop{|x-x_{k'}|\leq 1 } } a_{k'}  \log\frac{1}{|x-x_{k'}|} + \frac{c}{\pi} \sum_{k' \neq k \atop{|x-x_{k'}|> 1 } } a_{k'}  \log|x-x_{k'}| \\
	& \leq \frac{c}{\pi} \sum_{k' \neq k \atop{|x-x_{k'}|\leq 1 } } a_{k'} \log \frac{2}{\delta_k} + \frac{c}{\pi} \sum_{k' \neq k \atop{|x-x_{k'}|> 1 } } a_{k'} \log \left( |x|+1 \right) \\
	& \leq \frac{c'}{\pi} \left( \log \frac{2}{\delta_k} + \log\left(|x|+1\right) \right).
\end{align*}
In particular, the second term in \eqref{eq:Kfnearxk}, $e_k(\cdot)$, is bounded near $x_k$, and thus
\begin{equation}\label{eq:decayKf}
	Kf(x) \to -\infty \text{ as } x\to x_k.
\end{equation} 

Let $V(x,t)$ and $W(x,t)$ be the Poisson integrals of $f(x)$ and $Kf(x)$, respectively, in the upper half plane $\mathbb{R}^2_+$. (The Poisson integral of $Kf(x)$ is well-defined, since by \eqref{tmp:loggrowth} $Kf(x)$ has logarithmic growth at infinity.) Since $f(x)$ is continuous and bounded in $\RR\setminus \left( \{x_k\}\cup\{0\} \right)$, by the classical theory\footnote{See \cite[Chapter 1 \S 2]{CBMS} for example.} for every $x\in \RR\setminus\left( \{x_k\}\cup\{0\} \right)$, we have that $V(z) \to f(x)$ as $z\to x$. Since $Kf(x)$ is continuous in $\RR \setminus\left( \{x_k\}\cup\{0\}\right)$, we also have that $W(z) \to Kf(x)$ as $z\to x$ for every $x\in \RR \setminus\left( \{x_k\}\cup\{0\}\right)$. 
Moreover, let \[ F_\ell(y) := \frac{c}{\pi} \sum_{k\leq \ell} a_k \log|y-x_k|, \]
and recall that $F_\ell \to Kf$ in $BMO(\RR)$ and pointwise in $\RR \setminus\left(\{x_k\} \cup\{0\} \right)$.
 We claim that for any $(x,t) \in \RR^2_+$,
\begin{equation}\label{cl:Poissonintegcv}
	W(x,t) = P_t \ast Kf(x) = \lim_{\ell\to +\infty} P_t \ast F_\ell(x), 
\end{equation} 
where $P_t$ denotes the Poisson kernel in $\RR^2_+$ with $P_t(\xi) = \frac{1}{\pi} \frac{t}{\xi^2+t^2}$. 
Then it immediately follows that
\begin{equation}\label{tmp:Wlog}
	W(x,t) = \sum P_t \ast \left(\frac{c}{\pi} a_k \log|x-x_k| \right) = \frac{c}{\pi}\sum  a_k \log|(x,t)-(x_k,0)|. 
\end{equation} 
In the second equality, we use the fact that the Poisson integral of $\log|x|$ in $\RR^2_+$ is just $\log|(x,t)|$.
To prove the claim, let
\[ \delta_0 : = \inf\{\left|(x,t)-(x_k,0) \right|: k\in \NN \}>0. \]
We let
\begin{align}
	P_t \ast h(x) & = P_t^1 \ast h(x) + P_t^2\ast h(x) \nonumber \\
	& \quad := \frac{1}{\pi} \int_{|y-x| \geq 2\delta_0 } \frac{t}{(x-y)^2+t^2} \, h(y) \, dy + \frac{1}{\pi} \int_{|y-x| < 2\delta_0 } \frac{t}{(x-y)^2+t^2} \, h(y) \, dy,\label{tmp:Ptcloseaway}
\end{align}
for any allowable function $h$ on $\RR$. For the first term, we have
\begin{align*}
	\left| P_t^1 \ast Kf(x) - P_t^1 \ast F_\ell(x) \right| & = \left| \frac{1}{\pi} \int_{|y-x| \geq 2\delta_0 } \frac{t}{(x-y)^2+t^2} \, \left(\frac{c}{\pi} \sum_{k>\ell} a_k \log|y-x_k| \right) \, dy \right| \\
	& \leq  \frac{1}{\pi} \int_{|y-x| \geq 2\delta_0 } \frac{t}{(x-y)^2+t^2} \,  \frac{c}{\pi} \sum_{k>\ell} a_k \left| \log|y-x_k| \right| \, dy \\
	& \leq  \frac{1}{\pi} \left(\frac{c}{\pi} \sum_{k>\ell} a_k \right) \int \frac{t}{(x-y)^2+t^2} \,  \max\left\{\log \frac{1}{\delta_0}, \log\left(|y|+1 \right) \right\} \, dy,
\end{align*}
which converges to $0$ as $\ell \to +\infty$. On the other hand, since $|x_k| \leq 1$ and $\log|\cdot|$ is square-integrable near the origin, for any $E$ which is a neighborhood of the origin, we have by the Minkowski integral inequality that
\[ \| Kf - F_\ell\|_{L^2(E)} =\left\| \frac{c}{\pi} \sum_{k>\ell} a_k \log|y-x_k| \right\|_{L^2(E)} \leq \frac{c}{\pi} \sum_{k>\ell} a_k \left\|\log|y-x_k| \right\|_{L^2(E)} \lesssim \frac{c}{\pi} \sum_{k>\ell} a_k \to 0, \]
as $\ell \to +\infty$. 
Hence
\begin{align*}
	\left|P_t^2 \ast Kf(x) - P^2_t \ast F_\ell(x) \right| & \leq \|Kf-F_\ell\|_{L^2(B_{|x|+2\delta_0}(0) )} \cdot \frac{1}{\pi} \left(  \int_{|y-x|\leq 2\delta_0} \left(\frac{t}{(x-y)^2+t^2} \right)^2 \, dy \right)^{1/2} \\
	& \leq \frac{1}{\pi} \left( \frac{2}{\delta_0} \right)^{1/2} \|Kf-F_\ell\|_{L^2(B_{|x|+2\delta_0}(0) )},
\end{align*}
which also converges to $0$ as $\ell \to +\infty$. This finishes the proof of the claim \eqref{cl:Poissonintegcv}.
In particular, \eqref{tmp:Wlog} implies that
\[ W(x,t) \to -\infty \quad \text{ as } (x,t) \text{ converges to } x_k, \]
with logarithmic decay.

Let $z=x+iy$, and we define the function
\[ g(z) := -W(x,y) + i V(x,y). \]
One can verify that $-W, V$ satisfy the Cauchy-Riemann equations and thus $g$ is analytic in $\RR^2_+$. 
Let $G(z):= \exp g(z)$. Then $G$ is analytic in $\RR^2_+$ and has non-tangential limit towards the boundary for almost every $x\in \partial \RR^2_+$, since $G$ is non-tangentially bounded at almost every boundary point. 
\[ \text{ For every } z\in \RR^2_+, \quad |G(z)| = \exp \left(- W(x,y) \right) \neq 0, \]
\begin{equation}\label{eq:Gx1}
	\text{ for every } x\in \RR \setminus \left( \{x_k\}\cup\{0\} \right), \quad |G(x)| = \exp\left(- Kf(x)\right) = \prod |x-x_k|^{-\frac{c}{\pi} a_k}, 
\end{equation} 
\begin{equation}\label{eq:Gx2}
	\text{ as } z \text{ converges to } x_k, \quad |G(z)| \approx \left|z-(x_k,0)\right|^{-\frac{c}{\pi} a_k} \to +\infty, 
\end{equation} 
and
\begin{equation}\label{eq:arg}
	\left|\arg G(z) \right| = \left|V(x,y) \right| \leq \|f\|_{L^\infty} \leq c'< \frac{\pi}{2}, 
\end{equation} 
where we used the maximum principle for the Poisson integral.

Let $\Phi$ denote the antiderivative of $G$ in $\RR^2_+$. More precisely, for any $z \in \RR^2_+$, let $\gamma_z$ denote any rectifiable curve from $i$ to $z$ and let
\[ \Phi(z) = \int_{\gamma_z} G. \]
This function is well-defined (i.e. independent of the choice of curve) since $\RR^2_+$ is simply connected. Besides, for any $z=(x,t) \in \RR^2_+$, by choosing $\gamma_z$ to be the line segment connecting $i$ to $z$, we can easily show that
\[ |\Phi(z)| = |z-i| \left|\int_0^1 G(\gamma_z(s)) \, ds \right| \leq |z-i| \cdot \min\{ t, 1 \}^{-\frac{c'}{\pi}} < +\infty, \]
namely $\Phi(z) \in \mathbb{C}$.
Since $|\Phi'(z)| = |G(z)| \neq 0 $, $\Phi$ is locally a conformal mapping. We claim that $\Phi$ is injective. Assume there are two distinct points $z_1, z_2\in \RR^2_+$ such that $\Phi(z_1) = \Phi(z_2)$. Let $\gamma_0$ denote the line segment in $\RR^2_+$ connecting $z_1$ to $z_2$. More precisely, we consider the parametrization $\gamma_0(t) = z_1 + t(z_2 - z_1)$ with $t\in [0,1]$. We have that
\[ \int_{\gamma_0} G = \Phi(z_2) - \Phi(z_1) = 0, \]
and that
\[ \int_{\gamma_0} G = (z_2-z_1) \cdot \int_0^1 G(\gamma_0(t)) \, dt. \]
Hence it follows that
\[ \int_0^1 G(\gamma_0(t)) \, dt = 0. \]
In particular, the real part of the above integral also vanishes, i.e.
\begin{equation}\label{eq:realinj}
	\int_{0}^1 |G(\gamma_0(t))|\cos\left(\arg G(\gamma_0(t)) \right) \, dt = 0. 
\end{equation} 
However, by \eqref{eq:arg} we have that
\[ \cos\left( \arg G(z) \right) \geq \cos c' >0\quad \text{ for all } z\in \RR^2_+. \]
Combined with $G(z) \neq 0$, this is a contradiction with \eqref{eq:realinj}. Therefore $\Phi$ is injective.

For any $z\in \partial \RR^2_+ $ and $z\neq x_k, z\neq 0$, by the properties of the Poisson integrals $V$ and $W$ it is easy to see that 
\[ \Phi(z) = \int_{\gamma_z} G \text{ is still well-defined} \]
and moreover, it is independent of the choice of the curve $\gamma_z \subset \overline{\RR^2_+}$.
Next, we show that $\Phi(z)$ is well-defined as $z\to x_k$ and as $z\to 0$, and is independent of the choice of the curve. 
For fixed $k$, let $z, z'$ be arbitrary points in $\overline{\RR^2_+} \setminus \{0\}$ with $z, z' \neq x_{k'}$ for any $k'\in \mathbb{N}$ 
 and such that $\delta:= \max\left\{|z-(x_k,0)|, |z'-(x_k,0)| \right\}$ is sufficiently small. Let $\gamma$ denote a rectifiable curve in $\overline{\RR^2_+}$ connecting $z$ and $z'$, such that $\gamma$ does not intersect the origin or $x_{k'}$ for any $k' \in \mathbb{N}$. Then by \eqref{eq:Gx2},
\[ \left| \Phi(z) - \Phi(z') \right| = \left| \int_{\gamma} G \right| 
 \lesssim \int_{\gamma} |\gamma(t) - (x_k,0)|^{-\frac{c}{\pi} a_k}.  \]
Since $\frac{c}{\pi} a_k < \frac12$ by the assumption \eqref{eq:akxk}, by carefully choosing the curve $\gamma$ (for example, by taking $\gamma$ to be the union of an arc on $\partial B_{\delta}(x_k)$ and a line segment on a ray from $x_k$) we can guarantee that 
\[ \int_{\gamma} |\gamma(t) - (x_k,0)|^{-\frac{c}{\pi} a_k} \to 0 \quad \text{ as } \delta \to 0. \]
Therefore $\Phi(z)$ is continuous and finite as $z\to x_k$. 
To show $\Phi(z)$ is continuous at the origin, let $z = (x_0,t_0), z'=(x'_0,t'_0)$ be arbitrary points in $\overline{\RR^2_+} \setminus\{0\}$ that are sufficiently close to the origin. Let $\delta:= \max\{|z|, |z'|\} $, and clearly $|x_0|, |x'_0|, t_0, t'_0 < \delta$. Let $t_* := \max\{t_0+\delta, t'_0+\delta \}$. Let $\gamma_1$ denote the vertical line segment between $z$ and $(x_0, t_*)$, $\gamma_2$ denote the horizontal line segment between $(x_0, t_*)$ and $(x'_0, t_*)$, and $\gamma_3$ denote the vertical line segment between $(x'_0, t_*)$ and $z'$, each parametrized by unit length. For each $i\in \{1,2,3\}$, we have
\begin{align*}
	\left| \int_{\gamma_i} G \right| \leq \int_{\gamma_i} |G| = \int_{\gamma_i} \exp\left(-W(z) \right) & = \int_{\gamma_i} \exp\left(-\sum \frac{c}{\pi} a_k \log |z-(x_k,0)| \right) \\
	& = \int_{\gamma_i} \prod |z-(x_k,0)|^{-\frac{c}{\pi} a_k}.
\end{align*} 
Since we always have that
\[ |z-(x_k,0)| \geq \Imag z, \]
it follows that
\begin{align*}
	\int_{\gamma_1} \prod |z-(x_k,0)|^{-\frac{c}{\pi} a_k} \leq \int_{t_0}^{t_*} t^{-\sum \frac{c}{\pi} a_k} \lesssim t_*^{1-\frac{c}{\pi} \sum a_k} \lesssim \delta^{1-\frac{c}{\pi} \sum a_k},
\end{align*}
\begin{align*}
	\int_{\gamma_2} \prod |z-(x_k,0)|^{-\frac{c}{\pi} a_k} \leq \int_{\gamma_2} t_*^{-\sum \frac{c}{\pi} a_k} = |x'_0-x_0| t_*^{-\sum \frac{c}{\pi} a_k} \lesssim \delta^{1-\frac{c}{\pi} \sum a_k},
\end{align*}
and the same estimate holds for $\gamma_3$. Therefore
\begin{align*}
	\left| \Phi(z) - \Phi(z') \right| = \left|\int_{\gamma_1 \cup \gamma_2 \cup \gamma_3} G \right| \leq \sum_{i=1}^3 \int_{\gamma_i} |G| \lesssim \delta^{1-\frac{c}{\pi} \sum a_k}.
\end{align*}
Hence $\Phi(z)$ is continuous and finite as $z\to 0$. To sum up, we have shown that $\Phi$ has a continuous extension to $\overline{\RR^2_+}$. Using the same argument \eqref{eq:realinj} as in the interior case, we can show that $\Phi$ is also injective on $\overline{\RR^2_+}$.


We claim that $\Phi(\infty) = \infty$. Thus in particular, the set $D:=\Phi(\RR^2_+)$ is unbounded, and $\partial D = \Phi(\partial \RR^2_+)$ (the boundary in $\RR^2_+$, not in the Riemann sphere). 
Let $z_j$ be an arbitrary sequence in $\RR^2_+$ such that $z_j \to \infty$. Let $\gamma_j$ denote the straight line segment connecting $i$ to $z_j$, namely $\gamma_j(t) = i+t(z_j-i)$ for $t\in [0,1]$. 
Then
\[ \Phi(z_j) = \int_{\gamma_j} G = (z_j-i) \cdot \int_0^1 G(\gamma_j(t))\, dt. \]
Hence
\[ \Real \frac{\Phi(z_j)}{z_j - i} = \Real \int_0^1 G(\gamma_j(t))\, dt = \int_0^1 |G(\gamma_j(t))| \cos\left(\arg G(\gamma_j(t)) \right) dt  \geq \cos c' \cdot \int_0^1 |G(\gamma_j(t))| \, dt. \]
Using \eqref{tmp:Wlog} again, we have
\[ \int_0^1 |G(\gamma_j(t))| \, dt = \int_0^1 \prod |\gamma_j(t) - (x_k,0)|^{-\frac{c}{\pi} a_k} \, dt \geq \int_0^1 \left(|\gamma_j(t)| + 1 \right)^{-\frac{c'}{\pi}} \, dt \geq \left(|z_j|+1 \right)^{-\frac{c'}{\pi}}. \]
Therefore
\begin{equation}\label{tmp:provePhiinfty}
	|\Phi(z_j)| = \left| z_j - i \right| \cdot \left| \frac{\Phi(z_j)}{z_j - i} \right| \geq \left| z_j - i \right| \cdot \left| \Real \frac{\Phi(z_j)}{z_j - i} \right| \gtrsim |z_j|^{1-\frac{c'}{\pi}},
\end{equation}
for $j$ sufficiently large. In particular $\Phi(z_j) \to \infty$ for any sequence $z_j$ in $\RR^2_+$ such that $z_j \to \infty$.

Since $\Phi: \RR^2_+ \to D$ is a conformal homeomorphism, and $\Phi$ is injective on $\partial \RR^2_+$ with $\partial D = \Phi(\partial \RR^2_+)$, it follows that $D$ is also simply connected and bounded by a simple curve. By the same argument as in \cite[Theorem 1.1]{Kenig}, 
at every $x\in \partial \RR^2_+$ where $\Phi'(x)$ exists and is different from $0$, it is a tangent vector to $\partial D$ at the point $p:=\Phi(x)$;
and a set $E\subset \partial \RR^2_+ $ has measure zero if and only if $\Phi(E)\subset \partial D$ has surface measure zero.
We remark that since $G(x)$ is continuous in $\RR \setminus \left( \{x_k\}\cup\{0\} \right)$, by the fundamental theorem of calculus $\Phi'(x) = G(x)$ there. 
Moreover, 
Let $[x_k-a, x_k+b]\subset \RR $ be an arbitrary interval containing $x_k$, with $a, b$ sufficiently small satisfying $0<a,b<\delta_k/2$ (recall the definition of $\delta_k$ in \eqref{def:dk}).
Recall that $\prod\limits_{k' \neq k} |x-x_{k'}|^{-\frac{c}{\pi} a_{k'}}$ is continuous at $x_k$. It follows that by choosing $a, b$ sufficiently small, we can guarantee that
\[ \prod\limits_{k' \neq k} |x-x_{k'}|^{-\frac{c}{\pi} a_{k'}} > \frac12 \prod\limits_{k' \neq k} |x_k-x_{k'}|^{-\frac{c}{\pi} a_{k'}} > 0 \quad \text{ for every } x\in  [x_k-a, x_k+b]. \]
Hence
\begin{align*}
	\fint_{x_k-a}^{x_k+b} |\Phi'(x)| ~dx = \fint_{x_k-a}^{x_k+b} |G(x)| ~dx & = \fint_{x_k-a}^{x_k+b} \prod |x-x_k|^{-\frac{c}{\pi} a_k} ~dx \\
	& \geq \fint_{x_k-a}^{x_k+b} |x-x_k|^{-\frac{c}{\pi} a_k} ~dx \cdot \frac12 \prod\limits_{k' \neq k} |x_k-x_{k'}|^{-\frac{c}{\pi} a_{k'}} \\
	& \gtrsim \frac12 \prod\limits_{k' \neq k} |x_k-x_{k'}|^{-\frac{c}{\pi} a_{k'}} \cdot \max\{a,b\}^{-\frac{c}{\pi} a_k}.
\end{align*}
In particular,
\begin{equation}\label{eq:avgPhider}
	\fint_{x_k-a}^{x_k+b} |\Phi'(x)| ~dx \to +\infty, \quad \text{ as } a, b \to 0+.
\end{equation}

Let $\omega$ denote the harmonic measure in $D$ with pole at infinity and normalized at $\Phi(0)$ (for its precise definition and properties, see \cite[Corollary 3.2 and Lemma 3.8]{KT}).
By the conformal invariance of the Brownian motion, we can find the explicit formula for $\omega$: 
we have that
\[ d\omega(z) = \frac{1}{|\Phi'(\Phi^{-1}(z))|} d\sigma(z), \]
where $\sigma$ denotes the surface measure at the boundary $\partial D$, namely $\sigma = \mathcal{H}^1 |_{\partial D}$.
In fact, let $\omega_{\RR^2_+}$ denote the harmonic measure of $\RR^2_+$ with pole at infinity and normalized at the origin. Clearly $d\omega_{\RR^2_+} = dx$. For any $z\in \partial D$, let $\Delta$ denote a surface ball of $D$ centered at $z$. 
 Then for every $z\in \partial D$ such that $\Phi^{-1}(z) \notin \{x_k\} \cup\{0\}$, as $\Delta \to z$, we have
\begin{equation}\label{eq:omegaPhider}
	\frac{\omega(\Delta)}{ \mathcal{H}^1(\Delta)} = \frac{\omega_{\RR^2_+}(\Phi^{-1}(\Delta)) }{ \int_{\Phi^{-1}(\Delta)} |\Phi'(x)| dx } = \frac{\int_{\Phi^{-1}(\Delta)} dx }{ \int_{\Phi^{-1}(\Delta)} |\Phi'(x)| dx } \longrightarrow \frac{1}{|\Phi'(\Phi^{-1}(z))|} 
	= \frac{1}{|G(\Phi^{-1}(z))|}>0. 
\end{equation} 
Here we use the conformal invariance of the harmonic measure and the area formula in the first equality.
On the other hand, when $z=\Phi(x_k)$ for some $x_k$, we claim that 
\[ 
\lim_{\Delta \to z} \frac{\omega(\Delta)}{\mathcal{H}^1(\Delta)} = 0. \] In fact, since $\Phi$ is a homeomorphism on $\partial \RR^2_+$, $\Phi^{-1}(\Delta)$ is just an interval containing $\Phi^{-1}(z)= x_k$. By \eqref{eq:avgPhider}, it follows that
\[ \lim_{\Delta \to z} \frac{\omega(\Delta)}{\mathcal{H}^1(\Delta)} = \lim_{a, b \to 0+ } \left( \fint_{x_k-a}^{x_k+b} |\Phi'(x)| ~dx \right)^{-1} = 0. \]
In short,
\[ \left\{z\in \partial D: \lim_{\Delta \to z} \frac{\omega(\Delta)}{\mathcal{H}^1(\Delta)} 
= 0\right\} \setminus\{\Phi(0)\} = 
~ \Phi\left(\{z_k: k\in \NN\} \right). \]

Lastly, we remark that given the input $f(x):= cH(x)$, where $c$ is a constant with $0<c<\pi/2$ and $H(x)$ is the Heaviside step function, our construction produces the following simple Lipschitz domain. Intuitively it is clear that the density of the harmonic measure in $D$ is zero only at the vertex.
\begin{figure}[h!]
    \begin{tikzpicture}
    	\draw (-4,0) -- (0, 0) -- (4, 1.9);
    	\draw [dashed] (0, 0) -- (4, 0);
    	\node[right] at (0.3, 0.1) [scale=.8] {$c$};
    	\node[right] at (-3,1) {$D$};
    \end{tikzpicture}
\end{figure}

\section{$C^1$ domains}\label{sec:C1}

The following lemma is just a special case of the more general Lemma \ref{lm:generalDinifunction} that we need later. But we introduce and prove this lemma first, in order to fix ideas.
\begin{lemma}\label{lm:Dinifunction}
	There exists a continuous function $f\in C(\RR)$ such that
	\begin{enumerate}
		\item\label{it:1} $f$ is a monotone non-decreasing function with $0\leq f\leq 1$, and $f \in C^1(\RR\setminus\{0\})$;
		\item\label{it:2} the modulus of continuity of $f$ at the origin, denoted by $\theta(r)$, satisfies 
			\[ \int_0^* \frac{\theta(r)}{r} dr = +\infty; \]
		\item\label{it:3} $Kf(x) \in C(\RR \setminus \{0\} )$, and $Kf(x) \to Kf(0) = -\infty$ as $x\to 0$, where $K$ denotes the Hilbert transform operator as is defined in \eqref{def:Kf}.
	\end{enumerate}
\end{lemma}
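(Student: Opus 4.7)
The plan is to mimic the Lipschitz construction of Section \ref{sec:Lip} using $C^1$-smoothed Heavisides concentrated at an accumulating sequence $x_k\to 0$. Fix a smooth non-decreasing $\phi:\RR\to[0,1]$ with $\phi\equiv 0$ on $(-\infty,-1]$ and $\phi\equiv 1$ on $[1,\infty)$, and choose $x_k=2^{-k}$, $a_k=1/k^2$, $\epsilon_k=2^{-2k}$, so that the transition intervals $[x_k-\epsilon_k,x_k+\epsilon_k]$ are pairwise disjoint and contained in $(0,1)$. Define
\[ f(x) := c_0\sum_{k=1}^\infty a_k\,\phi\!\left(\frac{x-x_k}{\epsilon_k}\right), \]
with $c_0>0$ chosen so that $0\leq f\leq 1$. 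Property \eqref{it:1} is immediate: $f$ is non-decreasing and bounded, and for any $x_0\in\RR\setminus\{0\}$ only finitely many transition intervals meet a small neighborhood of $x_0$ while the remaining summands are locally constant there, so $f\in C^\infty$ near $x_0$. For \eqref{it:2}, we have $f(0)=0$ and, for $x\in[2^{-k-1},2^{-k}]$ outside the transition intervals, $f(x)=c_0\sum_{j\geq k+1}a_j\asymp 1/k\asymp 1/\log(1/x)$; hence $\theta(r)\gtrsim 1/\log(1/r)$ and $\int_0^*\theta(r)/r\,dr=+\infty$.

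For \eqref{it:3}, write each $h_k(x):=\phi((x-x_k)/\epsilon_k)=(H(\cdot-x_k)\ast\eta_{\epsilon_k})(x)$ with mollifier $\eta_{\epsilon_k}(t)=\epsilon_k^{-1}\phi'(t/\epsilon_k)$. Since $K$ commutes with convolution and $K[H(\cdot-x_k)](x)=\pi^{-1}\log|x-x_k|$ modulo the bounded regularization in \eqref{def:Kf}, we have
\[ Kh_k(x) = \frac{1}{\pi}\int\log|x-x_k-s|\,\eta_{\epsilon_k}(s)\,ds + O(1), \]
which equals $\pi^{-1}\log|x-x_k|+O(\epsilon_k/|x-x_k|)+O(1)$ for $|x-x_k|\geq 2\epsilon_k$ and is a smooth function bounded everywhere by $\pi^{-1}|\log\epsilon_k|+O(1)$. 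Arguing as in Section \ref{sec:Lip}, the series $Kf(x)=c_0\sum_k a_k Kh_k(x)$ converges uniformly on compact subsets of $\RR\setminus\{0\}$, giving $Kf\in C(\RR\setminus\{0\})$.

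To prove $Kf(x)\to-\infty$ as $x\to 0$, fix $x>0$ small with $|x|\asymp 2^{-k_0}$ and split the sum into three groups. Terms with $k\leq k_0-2$ satisfy $|x-x_k|\asymp x_k$ and contribute $\approx -(c_0\log 2/\pi)\sum_{k\leq k_0-2}1/k\to-\infty$ as $k_0\to\infty$, since $\sum 1/k$ diverges. The $O(1)$ many ``near'' terms $k\in\{k_0-1,k_0,k_0+1\}$ contribute at most $O(a_{k_0}|\log\epsilon_{k_0}|)=O(1/k_0)$. Terms with $k\geq k_0+2$ satisfy $|x-x_k|\asymp|x|$, contributing $-(c_0 k_0\log 2/\pi)\sum_{k\geq k_0+2}a_k=O(1)$. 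Combining yields $Kf(x)\to-\infty$. The main obstacle is the scale matching: $\epsilon_k$ must be small enough both to keep the transitions disjoint and to make $a_k|\log\epsilon_k|$ small (so the ``near-field'' contribution stays bounded), while $a_k$ must simultaneously satisfy $\sum a_k<\infty$ (boundedness of $f$) and $\sum k\,a_k=\infty$ (blowup of $Kf$ at $0$). The choices $a_k=1/k^2$, $x_k=2^{-k}$, $\epsilon_k=2^{-2k}$ meet all these constraints simultaneously.
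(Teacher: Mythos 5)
Your construction is correct in its essentials, but it takes a genuinely different route from the paper. The paper builds $f$ by taking $f(x)=\tilde\theta(x)=\bigl(\log_2\tfrac1x\bigr)^{-1}$ directly on $(0,x_0]$ (a concave, monotone non-Dini modulus), capping it off smoothly, and then verifying (3) by hand: it splits the principal value $\int_0^{x_0}\frac{f(y)}{x-y}\,dy$ into $f(x)[\log x-\log(x_0-x)]+\int_0^{x_0}\frac{f(y)-f(x)}{x-y}\,dy$ region by region, controls the pieces via the mean value theorem and the monotonicity/concavity of $f'$, and gets the divergence $Kf(x)\to-\infty$ from Fatou's lemma applied to $\int_0^{x_0}\frac{\theta(y)}{y}\,dy=+\infty$. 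You instead superpose mollified Heavisides at $x_k=2^{-k}$ with weights $a_k=1/k^2$, so that $Kf$ becomes an explicit locally uniformly convergent series of smoothed logarithms $\frac1\pi\int\log|x-x_k-s|\,\eta_{\epsilon_k}(s)\,ds$, and the blow-up at $0$ is read off from $\sum k\,a_k=\sum 1/k=\infty$ (which is exactly the non-Dini condition for the modulus $\sum_{j\ge k}a_j\asymp 1/k$). Your mechanism is arguably more transparent and avoids all the delicate principal-value estimates. What the paper's version buys is reusability: the explicit form of $f=\tilde\theta$ on $(0,x_0]$ is what lets Lemma \ref{lm:generalDinifunction} match an \emph{arbitrary} prescribed modulus $\theta$ up to $\theta(r)\le\tilde\theta(r)\le\theta(4r)$, and the intermediate two-sided bounds from its proof (e.g.\ \eqref{eq:Htmidt4}, \eqref{eq:x>0lb}--\eqref{eq:x>0ub}) are reused heavily in the proof of Theorem \ref{thm:main} (integrability of $\exp(-ca_kK\tilde H)$, the tangent-vector computation). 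Your construction proves the lemma as literally stated, but would need the weights re-chosen (roughly $a_k\approx\tilde\theta(2^{-k+1})-\tilde\theta(2^{-k})$) to serve the same downstream role, and the quantitative near-origin estimates would have to be redone.

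Three small repairs you should make. First, the transition intervals are not pairwise disjoint for $k=1,2$ with $\epsilon_k=2^{-2k}$ (e.g.\ $[1/4,3/4]$ meets $[3/16,5/16]$); start the sum at $k=3$ or shrink the first few $\epsilon_k$ — nothing else changes. Second, you only treat $x>0$ in the blow-up argument; the case $x<0$ must be stated (it is in fact easier, since then $|x-x_k|\ge x_k\ge 2\epsilon_k$ for all $k$ and there are no ``near'' terms). Third, the identity $Kh_k(x)=\frac1\pi\int\log|x-x_k-s|\,\eta_{\epsilon_k}(s)\,ds$ deserves one line of justification with the regularized definition \eqref{def:Kf}: the difference $h_k-H(\cdot-x_k)$ is supported in a compact subset of $(0,1)$, where $\chi_{|y|>1}$ vanishes, so the normalization term is unaffected and the difference of the two transforms is a genuine principal-value Hilbert transform of a compactly supported bounded function; with this observation your $O(1)$ error is actually $0$.
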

\begin{proof}
	Let $\theta: (0,1) \to \RR_+$ be defined as $\theta(r) = \left(\log_2 \frac{1}{r} \right)^{-1}$. Clearly $\theta$ is monotone increasing, $\lim_{r\to 0+} \theta(r) = 0$, and
	\[ \int_0^* \frac{\theta(r)}{r} dr = +\infty. \] 
	One can check that there exists $x_0 = 2^{-\frac{1}{\log 2}} \in (0, 1/2)$ such that in $(0, x_0]$, $\theta$ is concave, and 
	\begin{equation}
		\theta'(x) = \frac{1}{x\cdot \log 2} \left( \log_2 \frac{1}{x} \right)^{-2} > 0 \text{ is monotone decreasing}.\label{eq:derf}
	\end{equation} 
	Let $g(\cdot)$ be a smooth, non-decreasing function defined on $[x_0, 1/2]$ such that
\[ g\left(x_0 \right) = \theta \left(x_0 \right) =  \log 2, \quad g\left( \frac12 \right) = \theta\left( \frac12 \right) = 1,  \]
\[ g'\left(x_0+ \right) = \theta'\left( x_0 \right) = 2^{\frac{1}{\log 2}}  \log 2, \quad g'\left( \frac12- \right) = 0, \]
and $|g'(x)| \leq \|g'\|_\infty = 2^{\frac{1}{\log 2}}$ for all $x\in [x_0, 1/2]$.
Finally we define $f: \RR \to \RR$ as follows
\[ f(x) = \left\{\begin{array}{ll}
	0, & x\leq 0 \\
	\theta(x) = \left(\log_2 \frac{1}{|x|} \right)^{-1}, & 0<x  \leq x_0 \\
	g(x) , & x_0 < x < \frac12 \\
	1, & x \geq \frac12
\end{array} \right. \]
It is not hard to see that $f$ satisfies \eqref{it:1} and \eqref{it:2}. Next, we analyze $Kf(x)$. Recall that the Hilbert transform maps bounded continuous functions into functions in the VMO space, so $Kf(x) \in VMO(\RR)$.

When $x<0$, we have that
\begin{equation}\label{eq:Kfx<0}
	\pi \cdot Kf(x) = \int_0^{1/2} \frac{f(y)}{x-y} \, dy + \log \left( \frac12-x \right), 
\end{equation} 
and clearly $Kf(x)$ is continuous on $(-\infty, 0)$.
A rough estimate (simply using the monotonicity of $f(\cdot)$) gives 
\begin{equation}\label{eq:x<0}
-\infty < (1-f(x_0)) \log(x_0-x) + f(x_0) \log (-x) \leq \pi\cdot Kf(x) \leq \log \left( \frac12 - x \right)< +\infty.
\end{equation} 

Moreover, we claim that
\[ \int_0^{1/2} \frac{f(y)}{x-y} \, dy \to -\infty \quad \text{ as } x\to 0-. \]
Combined with \eqref{eq:Kfx<0}, the claim implies that
\[ Kf(x) \to -\infty \quad \text{ as } x\to 0-. \]
In fact, since
\begin{equation}\label{tmp:Kfx<0}
	\int_0^{1/2} \frac{f(y)}{x-y} \, dy = -\int_0^{x_0} \frac{\theta(y)}{y-x} \, dy - \int_{x_0}^{1/2} \frac{g(y)}{y-x} \, dy 
\end{equation} 
and the second term is uniformly bounded in $x$, it suffices to show that
\[ \int_0^{x_0} \frac{\theta(y)}{y-x} \, dy \to +\infty \quad \text{ as } x\to 0-. \]
This follows easily from Fatou's Lemma:
\[ +\infty = \int_0^{x_0} \frac{\theta(y)}{y} \, dy \leq \liminf_{x\to 0-} \int_0^{x_0} \frac{\theta(y)}{y-x} \, dy \]


When $0<x<x_0$, we have that
\begin{align}
	\pi\cdot Kf(x) = \lim_{\epsilon\to 0} \left(\int_0^{x-\epsilon}  \frac{f(y)}{x-y} dy + \int_{x+\epsilon}^{x_0} \frac{f(y)}{x-y} dy \right) +\int_{x_0}^{\frac12}  \frac{f(y)}{x-y} dy + \log\left(\frac12 - x \right).\label{eq:Htmidt1}
\end{align}
Notice that
\begin{align}
	\lim_{\epsilon\to 0} \left( \int_0^{x-\epsilon}  \frac{f(y)}{x-y} dy + \int_{x+\epsilon}^{x_0} \frac{f(y)}{x-y} dy \right) = f(x) \cdot \left[ \log x - \log\left( x_0 - x \right) \right] + \int_0^{x_0 } \frac{f(y) - f(x)}{x-y} dy, \label{eq:Htmidt2}
\end{align}
when the integral on the right hand side is well-defined.
In order to analyze the last term $\int_0^{x_0 } \frac{f(y) - f(x)}{x-y} dy$, we break the integral into two regions $y\in [0,x]$ and $y\in [x, x_0]$. On one hand, by the mean value theorem and the monotonicity of $f'(\cdot)$ on $[0,x_0]$ we have
\begin{align}
	0< \int_x^{x_0 } \frac{f(y) - f(x)}{y-x} dy 
	\leq \sup_{[x,x_0] } f' \cdot (x_0-x) = f'(x)\cdot (x_0-x).\label{eq:Htmid1}
\end{align}
On the other hand
\begin{align}
	0 < \int_0^{x } \frac{f(y) - f(x)}{y-x} dy & = \int_0^{x/2 } \frac{f(y) - f(x)}{y-x} dy + \int_{x/2}^{x } \frac{f(y) - f(x)}{y-x} dy. \label{eq:Htmid2}
\end{align}
Since $0\leq f\leq f(x_0)$ on $[0,x]$, we can control the first term:
\begin{equation}\label{eq:Htmid3}
	0< \int_0^{x/2 } \frac{f(y) - f(x)}{y-x} dy \leq \int_0^{x/2 } \frac{f(x)}{x-y} dy \leq f(x_0) \cdot \int_0^{x/2} \frac{1}{x-y} \, dy = f(x_0) \cdot \log 2;
\end{equation}
again by the mean value theorem and the monotonicity of $f'(\cdot)$ on $[0,x_0]$, we can control the second term:
\begin{equation}
	0 < \int_{x/2}^{x } \frac{f(y) - f(x)}{y-x} dy \leq \int_{x/2}^{x} \sup_{[x/2,x]} f' \, dy \leq f'\left( \frac{x}{2} \right) \cdot \frac{x}{2}.\label{eq:Htmid4}
\end{equation} 
Combining \eqref{eq:Htmid1}, \eqref{eq:Htmid2}, \eqref{eq:Htmid3} and \eqref{eq:Htmid4}, we conclude that
\begin{equation}
0 < \int_0^{x_0 } \frac{f(y) - f(x)}{y-x} dy \leq f'(x) \cdot (x_0 - x) + f'\left(\frac{x}{2}\right) \cdot \frac{x}{2} + f(x_0) \cdot \log 2. \label{eq:Htmidt3}
\end{equation} 
Finally, combining \eqref{eq:Htmidt1}, \eqref{eq:Htmidt2}, \eqref{eq:Htmidt3}, 
we obtain
\begin{align}
	\pi\cdot Kf(x) & \geq
	(1-f(x)) \log (x_0-x) + f(x)\log x - f'(x) \cdot (x_0-x) -f'\left( \frac{x}{2} \right) \cdot \frac{x}{2} - f(x_0) \cdot \log 2 \nonumber \\
	& > -\infty,\label{eq:x>0lb}
\end{align}
and
\begin{align}
	\pi \cdot Kf(x) & \leq (1-f(x_0)) \log \left(\frac12 - x \right) + (f(x_0)-f(x)) \log (x_0-x) + f(x) \log x \nonumber \\
	& < +\infty.\label{eq:x>0ub}
\end{align}
%
%

We claim that
\[ Kf(x) \to -\infty \quad \text{ as } x\to 0+. \]
By \eqref{eq:Htmidt1}, \eqref{eq:Htmidt2} and the dominated convergence theorem (which implies that $\lim_{x\to 0+} \int_{x_0}^{1/2} \frac{f(y)}{x-y} dy $ $ = -\int_{x_0}^{1/2} \frac{f(y)}{y} dy $ and is finite), to prove the claim it suffices to show 
\[ f(x) \log x + \int_0^{x_0} \frac{f(y)-f(x)}{x-y} \, dy = 
 - f(x) \log \frac{1}{x} - \int_0^{x_0} \frac{f(y)-f(x)}{y-x} \,dy 
\to -\infty, \]
as $x\to 0+$.
This holds because
\[ f(x) \log \frac{1}{x} > 0 
\]
and
\[ \liminf_{x\to 0+} \int_0^{x_0} \frac{f(y)-f(x)}{y-x} \,dy \geq \int_0^{x_0} \frac{\theta(y)}{y} \, dy = +\infty, \]
by Fatou's Lemma (since $\frac{f(y)-f(x)}{y-x} \in [0,+\infty]$ for every $y\in [0,x_0]$) and the fact that $\lim_{x\to 0} f(x) = f(0) = 0$.
Lastly, we also remark that since we have proven the right hand side of \eqref{eq:Htmidt2}, as a principal value, is finite, we can formally write
\begin{equation}\label{eq:Htmidt4}
	\int_0^{x_0} \frac{f(y)}{x-y} \, dy : = f(x) \cdot \left[ \log x - \log\left( x_0 - x \right) \right] + \int_0^{x_0 } \frac{f(y) - f(x)}{x-y} dy, 
\end{equation} 
which is well-defined for every $0<x<x_0$ and decays to $-\infty$ as $x\to 0+$. (Recall that by \eqref{eq:Kfx<0} and \eqref{tmp:Kfx<0}, the decay rate of $Kf(x)$ as $x\to 0-$ is also given by $\int_0^{x_0} \frac{f(y)}{x-y} dy$.)

When $x=x_0$, we have
\begin{align}
	\pi\cdot Kf(x_0) & =\lim_{\epsilon \to 0} \left( \int_0^{x_0-\epsilon} \frac{f(y)}{x_0-y} \, dy + \int_{x_0+\epsilon}^{1/2} \frac{f(y)}{x_0-y} \, dy \right) + \log\left( \frac12 - x_0 \right) \nonumber \\
	& = \lim_{\epsilon \to 0} \left( \int_0^{x_0-\epsilon} \frac{f(x_0)}{x_0-y} \, dy + \int_{x_0+\epsilon}^{1/2} \frac{f(x_0)}{x_0-y} \, dy \right) + \int_0^{x_0} \frac{f(y)-f(x_0)}{x_0-y} \, dy \nonumber \\
	& \qquad + \int_{x_0}^{1/2} \frac{f(y)-f(x_0)}{x_0-y} \, dy + \log\left( \frac12 - x_0 \right) \nonumber \\
	& = f(x_0)  \log x_0 + (1-f(x_0)) \log \left( \frac12 - x_0 \right) \nonumber \\
	& \qquad  + \int_0^{x_0} \frac{f(y)-f(x_0)}{x_0-y} \, dy + \int_{x_0}^{1/2} \frac{f(y)-f(x_0)}{x_0-y} \, dy.\label{eq:Kfx0}
\end{align}
For the last two terms, notice that
\[ 0< \int_{x_0}^{1/2} \frac{f(y)-f(x_0)}{y-x_0} \, dy \leq \|g'\|_\infty \left( \frac12 - x_0 \right), \]
\[ 0<\int_0^{x_0/2} \frac{f(x_0)-f(y)}{x_0-y} \, dy \leq \int_0^{x_0/2} \frac{f(x_0)}{x_0-y} \, dy = f(x_0) \cdot \log 2,  \]
and by the concavity of $f$ on $(0,x_0)$,
\begin{equation}\label{eq:estx=x0}
	0 \leq  \int_{x_0/2}^{x_0} \frac{f(x_0)-f(y)}{x_0-y} \, dy \leq f'\left( \frac{x_0}{2} \right) \cdot \frac{x_0}{2}. 
\end{equation} 
Therefore
\begin{align*}
	\pi\cdot Kf(x_0) & \geq f(x_0) \log \frac{x_0}{2} + (1-f(x_0)) \log \left( \frac12 - x_0 \right) - \|g'\|_\infty \left( \frac12 - x_0 \right) - f'\left( \frac{x_0}{2} \right) \cdot \frac{x_0}{2} \\
	& > -\infty,
\end{align*} 
\[ \pi \cdot Kf(x_0) \leq f(x_0)  \log x_0 + (1-f(x_0)) \log \left( \frac12 - x_0 \right) 
< +\infty. \]
Moreover, by \eqref{eq:Htmidt1}, \eqref{eq:Htmidt2} and \eqref{eq:Kfx0}, we also have
\begin{equation}\label{eq:Kfx0l}
	Kf(x) \to Kf(x_0) \quad \text{ as } x \to x_0-. 
\end{equation}

When $x_0<x<1/2$, we have
\begin{align}
	\pi\cdot Kf(x) & = \int_0^{x_0} \frac{f(y)}{x-y} dy + \lim_{\epsilon \to 0}\left(  \int_{x_0}^{x-\epsilon} \frac{f(y)}{x-y} dy + \int_{x+\epsilon}^{1/2} \frac{f(y)}{x-y} dy \right) + \log\left( \frac12 - x \right) \nonumber \\
	& = \int_0^{x_0} \frac{f(y)}{x-y} dy + f(x) \left[ \log(x-x_0) - \log \left(\frac12 - x \right) \right] \nonumber \\
	& \qquad \qquad + \int_{x_0}^{1/2} \frac{f(y)-f(x)}{x-y} dy + \log\left(\frac12 - x \right).\label{eq:Kfx>x0}
\end{align}
Notice that
\[ -\|g'\|_\infty \cdot \left( \frac12 - x_0 \right) \leq \int_{x_0}^{1/2} \frac{f(y)-f(x)}{x-y} dy \leq 0,  \]
and 
\[ 0 \leq \int_0^{x_0} \frac{f(y)}{x-y} dy \leq f(x_0) \cdot \int_0^{x_0} \frac{1}{x-y} dy = f(x_0) \left[ \log x - \log (x-x_0) \right]. \]
Therefore we have that
\begin{equation}\label{eq:Kfx>x0lb}
	\pi\cdot Kf(x) \geq f(x) \log(x-x_0) + (1-f(x)) \log \left( \frac12 - x \right) -\|g'\|_\infty\cdot \left( \frac12 - x_0 \right)> -\infty, 
\end{equation} 
and
\begin{equation}\label{eq:Kfx>x0ub}
	\pi\cdot Kf(x) \leq \left(f(x)-f(x_0) \right) \log(x-x_0) + f(x_0) \log x + (1-f(x)) \log\left(\frac12 - x \right)<+\infty. 
\end{equation} 
Moreover, by \eqref{eq:Kfx>x0} and \eqref{eq:Kfx0}, we have that
\begin{equation}\label{eq:Kfx0r}
	Kf(x) \to Kf(x_0) \quad \text{ as } x\to x_0+. 
\end{equation} 

When $x= 1/2$, we have
\begin{align}
	\pi\cdot Kf\left( \frac12 \right) & = \lim_{\epsilon \to 0} \int_0^{1/2-\epsilon} \frac{f(y)}{1/2 - y} \, dy + \int_{1/2}^\infty \left[ \frac{1}{1/2 - y}\chi_{\{y> 1/2 +\epsilon\}} + \frac{1}{y} \right] \, dy \nonumber \\
	& = \int_0^{x_0} \frac{f(y)}{1/2-y} \, dy + \int_{x_0}^{1/2} \frac{f(y)-f(1/2)}{1/2-y} \, dy + \log \left( \frac12 - x_0 \right).\label{eq:Kf1/2}
\end{align}
Therefore
\begin{equation}\label{eq:estx=1/2}
	-\infty< \log x_0 \leq \pi \cdot Kf\left( \frac12 \right) \leq -(1-f(x_0))\cdot \log 2 + f(x_0) \cdot \log x_0< +\infty.  
\end{equation} 
Moreover, by \eqref{eq:Kfx>x0}, \eqref{eq:Kf1/2} and the assumption $\lim_{x\to 1/2} f(x) = f(1/2) = 1$, we have that
\[ Kf(x) \to Kf\left( \frac12 \right) \quad \text{ as } x\to \frac12-. \]

When $x>1/2$,
\begin{equation}\label{eq:Kfx>1/2}
	\pi\cdot Kf(x) = \int_0^{\frac12}  \frac{f(y)}{x-y} dy + \log \left(  x-\frac12 \right).
\end{equation}
Hence we have
\begin{equation}\label{eq:estx>1/2}
	-\infty<   \log\left(x-\frac12 \right) \leq  \pi\cdot Kf(x) \leq  \log x - \log \left( x- \frac12 \right) + \log \left(x-\frac12 \right)  =  \log x < +\infty.
\end{equation} 
Moreover, since by Lemma \ref{lm:prelim},
\[ \log\left( x- \frac12 \right) - \log\left( \frac12 - x_0 \right) = f\left( \frac12 \right) \cdot \int_x^{x_0} \frac{1}{1/2-y} \, dy, \]
by combining \eqref{eq:Kf1/2} and \eqref{eq:Kfx>1/2} we show that
\[ Kf(x) \to Kf\left( \frac12 \right) \quad \text{ as } x\to \frac12+. \]
\end{proof}

Next, for any non-decreasing function $\theta$ satisfying \eqref{cond:theta}, we construct a continuous function whose modulus of continuity is given by $\theta$.
\begin{lemma}\label{lm:generalDinifunction}
	Let $\theta: \RR_+ \to \RR_+$ be a monotone non-decreasing function such that 
	\[ \lim_{r\to 0+} \theta(r) = 0 \quad \text{ and } \quad \int_0^* \frac{\theta(r)}{r} \, dt = +\infty. 
	\]
	Let $x_0>0$ be sufficiently small (depending on $\theta$).
	There exists $f\in C(\RR)$, defined as in \eqref{def:Htilde}, which satisfies all the properties in Lemma \ref{lm:Dinifunction}, and moreover, the modulus of continuity of $f$ at the origin, denoted by $\tilde{\theta}(r)$, satisfies
	\begin{equation}\label{eq:modcont}
		\theta(r) \leq \tilde{\theta}(r) \leq \theta(4r). 
	\end{equation} 
\end{lemma}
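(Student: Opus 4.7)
The plan is to imitate the construction of $f$ in the proof of Lemma \ref{lm:Dinifunction}, but to replace the concrete choice $\theta(r) = (\log_2(1/r))^{-1}$ there by a smoothed version of the given $\theta$. The reason for the smoothing is that $\theta$ is only assumed monotone and continuous at $0+$, whereas the Hilbert transform estimates in the previous proof exploited the smoothness and concavity of $(\log_2(1/r))^{-1}$ on a neighborhood of $0$ (in particular, monotonicity of its derivative).

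Fix a non-negative bump $\psi \in C^\infty_c((1,2))$ with $\int \psi = 1$, and define
\[
\hat\theta(x) := \int_1^2 \theta(xs)\,\psi(s)\,ds \quad \text{for } x > 0, \qquad \hat\theta(0) := 0.
\]
Since $\theta$ is monotone non-decreasing, so is $\hat\theta$, and $\theta(x) \leq \hat\theta(x) \leq \theta(2x)$ follows at once from $\int \psi = 1$. Integration by parts in the $s$-variable (using $\psi(1) = \psi(2) = 0$) yields
\[
\hat\theta'(x) = -\frac{1}{x}\int_1^2 \bigl(\psi(s) + s\psi'(s)\bigr)\theta(xs)\,ds,
\]
and analogous identities for higher derivatives, so $\hat\theta \in C^\infty((0,\infty))$. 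Pick $x_0 \in (0,1/4)$ small enough that $\hat\theta(x_0) < 1$, possible since $\hat\theta(x_0) \leq \theta(2x_0) \to 0$. Then set
\begin{equation}\label{def:Htilde}
f(x) := \begin{cases} 0, & x \leq 0, \\ \hat\theta(x), & 0 < x \leq x_0, \\ g(x), & x_0 \leq x \leq 1/2, \\ 1, & x \geq 1/2, \end{cases}
\end{equation}
where $g \in C^\infty([x_0,1/2])$ is any non-decreasing interpolation satisfying $g(x_0) = \hat\theta(x_0)$, $g'(x_0+) = \hat\theta'(x_0)$, $g(1/2) = 1$, $g'(1/2-) = 0$, with $\|g'\|_\infty < \infty$.

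Properties (1) of Lemma \ref{lm:Dinifunction} and \eqref{eq:modcont} are immediate: $f$ is $C^1$ away from $0$, non-decreasing, and takes values in $[0,1]$ by construction; and since $f(0) = 0$ and $f$ is non-decreasing, the modulus of continuity at the origin equals $\hat\theta(r)$ for $r \in (0,x_0)$, which gives $\theta(r) \leq \tilde\theta(r) \leq \theta(2r) \leq \theta(4r)$. Property (2) of Lemma \ref{lm:Dinifunction} then follows because $\int_0^* \tilde\theta(r)/r\,dr \geq \int_0^* \theta(r)/r\,dr = +\infty$.

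Property (3) is obtained by repeating the region-by-region computation of $Kf$ from the proof of Lemma \ref{lm:Dinifunction} (covering $x<0$, $0<x<x_0$, $x=x_0$, $x_0 < x < 1/2$, $x=1/2$, and $x>1/2$) with the new $f$. The only places where the earlier argument genuinely invoked concavity of $\theta$ were the estimates \eqref{eq:Htmid1}, \eqref{eq:Htmid4}, and \eqref{eq:estx=x0}, where $\sup_{[a,b]} f'$ was replaced by a specific endpoint value via monotonicity of $f'$; we instead use the weaker fact that $f' = \hat\theta'$ is continuous on $(0,x_0]$ and hence bounded on every compact subinterval, which is all that is needed to conclude finiteness and continuity of $Kf$ on $\RR \setminus \{0\}$. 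The divergence $Kf(x) \to -\infty$ as $x \to 0$ is obtained by the same Fatou-type argument as before, whose critical input $\int_0^{x_0} f(y)/y\,dy = +\infty$ follows from $f(y) \geq \theta(y)$ together with \eqref{cond:theta}. The main (mild) obstacle is thus checking that all Hilbert-transform estimates still close without the monotonicity of $f'$; this is resolved by observing that local $C^1$-boundedness of $f'$ alone is actually sufficient.
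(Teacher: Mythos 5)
Your construction is, at its core, the same as the paper's: regularize $\theta$ by an averaging operator that preserves monotonicity and satisfies $\theta(r)\le\cdot\le\theta(4r)$, splice the result with a smooth ramp $g$ and the constants $0$ and $1$, and rerun the proof of Lemma \ref{lm:Dinifunction} with the endpoint values of $f'$ replaced by suprema (which is exactly what the paper does too). The genuine difference is the choice of kernel: the paper uses the double logarithmic average $\tilde\theta(r)=\frac{1}{\log^2 2}\int_r^{2r}\frac1t\int_t^{2t}\frac{\theta(s)}{s}\,ds\,dt$ of \eqref{def:thetatilde}, while you use the multiplicative mollification $\hat\theta(x)=\int_1^2\theta(xs)\psi(s)\,ds$; yours buys $C^\infty$ regularity on $(0,\infty)$ where the paper's is only $C^1$, at the cost of proving a variant of the lemma (the statement pins $f$ down via the paper's \eqref{def:Htilde}). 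Two points to tighten. First, your concluding claim that ``local $C^1$-boundedness of $f'$ alone is sufficient'' is true for the three properties of Lemma \ref{lm:Dinifunction}, but this lemma is consumed by the proof of Theorem \ref{thm:main}, which repeatedly invokes the quantitative bound \eqref{eq:bdthetader}, namely $\tilde\theta'(r)\le\frac{1}{r\log 2}$, to obtain estimates that are \emph{uniform} as $x\to0$ (see \eqref{tmp:thetabeforex}, \eqref{tmp:thetaafterx}, and the uniform control of $K\tilde H$ away from the origin); mere local boundedness would not suffice there. Your $\hat\theta$ does satisfy the analogous bound $|\hat\theta'(x)|\le\frac{\theta(2x)}{x}\int_1^2|\psi(s)+s\psi'(s)|\,ds\lesssim_\psi \frac1x$, so nothing downstream breaks, but this should be recorded as part of the lemma's output rather than discarded. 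Second, the integration by parts in the $s$-variable is not directly legitimate, since $s\mapsto\theta(xs)$ need not be differentiable; the clean justification is to substitute $u=xs$, write $\hat\theta(x)=\frac1x\int\theta(u)\psi(u/x)\,du$, and differentiate under the integral sign (only $\psi$ is differentiated), which yields exactly the formula you state.
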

\begin{proof}
	Let
	\begin{equation}\label{def:thetatilde}
	\tilde{\theta}(r) = \frac{1}{\log^2 2} \int_r^{2r} \frac1t \int_t^{2t} \frac{\theta(s)}{s} ds ~dt.
	\end{equation}
	Simple computations show that
	\begin{equation}\label{eq:thetatilde}
		\theta(r) \leq \tilde{\theta}(r) \leq \theta(4r), \quad \lim_{r\to 0+} \tilde{\theta}(r) = 0, 
	\end{equation} 
	and
	\begin{align*}
		\frac{d}{dr} \tilde{\theta}(r) & = \frac{1}{\log^2 2} \cdot \frac{1}{r} \left[ \int_{2r}^{4r} \frac{\theta(s)}{s} ds -  \int_{r}^{2r} \frac{\theta(s)}{s} ds \right] \\
		& = \frac{1}{\log^2 2} \int_1^2 \frac{\theta(2rt) - \theta(rt)}{rt}\, dt \geq 0.
	\end{align*} 
	Let $x_*$ be the largest real number such that $\tilde{\theta}(r)< 1$ for all $r\in [0,x_*)$. (If $\tilde{\theta}(r)<1$ for all $r\in \RR_+$, we simply let $x_* = 1/2$.) Then for any $r\leq x_*/4$, we have
	\begin{equation}\label{eq:bdthetader}
		\frac{d}{dr} \tilde{\theta}(r) \leq \frac{1}{\log^2 2} \int_1^2 \frac{1}{rt} \, dt = \frac{1}{r\log 2}.
	\end{equation} 
	Let $x_0 \in (0, x_*/4)$ be sufficiently small such that $\tilde{\theta}(x_0)<1/2$ (other than this constraint, we are free to choose $x_0$ as small as needed). Let $g$ be a smooth, non-decreasing function defined on $[x_0, x_*]$, such that
	\[ g(x_0) = \tilde{\theta}(x_0), \quad g(x_*)  = 1, \]
	\[ g'(x_0+) = \frac{d}{dr} \tilde{\theta}(x_0), \quad g'(x_*-) = 0, \]
	and $|g'(r)| \leq \|g'\|_\infty$. We define the function $f: \RR\to \RR$ as follows:
	\begin{equation}\label{def:Htilde}
		f(x) = \left\{\begin{array}{ll}
		0, & x\leq 0 \\
		\tilde{\theta}(x), & 0<x \leq x_0 \\
		g(x), & x_0<x< x_* \\
		1, & x\geq x_*
	\end{array} \right. 
	\end{equation} 
	Clearly $f(x)$ satisfies (1) (2) of Lemma \ref{lm:Dinifunction}, \eqref{eq:modcont}, and $Kf(x) \in VMO(\RR)$ since $f$ is a bounded continuous function on $\RR$.
	
	In the proof of the property (3) in Lemma \ref{lm:Dinifunction}, we use the fact that on $[0,x_0]$, the function $f$ is monotone non-decreasing, differentiable except at the origin and concave. In the general case here $f(x) = \tilde{\theta}(x)$ may not be concave in $[0,x_0]$. However, after a careful inspection of the estimate of $Kf(x)$ in the interval $0<x\leq x_0$, when $f$ is not concave it suffices to make the following changes: in \eqref{eq:Htmid1} replace $f'(x)$ by $\sup_{[x,x_0]} f'$, in \eqref{eq:Htmid4} replace $f'(x/2)$ by $\sup_{[x/2,x]} f'$, replace these terms accordingly in the lower bound \eqref{eq:x>0lb}, and replace $f'(x_0/2)$ in \eqref{eq:estx=x0} by $\sup_{[x_0/2,x_0]} f'$.
	The rest of the proof is exactly the same as in Lemma \ref{lm:Dinifunction}.
\end{proof}

From now on, we always denote the function in Lemma \ref{lm:generalDinifunction} (see \eqref{def:Htilde}) as $\tilde{H}(\cdot)$, which will play the same role as the Heaviside function in Section \ref{sec:Lip}. (In the construction of the function in Lemma \ref{lm:generalDinifunction}, we choose $x_0>0$ sufficiently small, depending on $\theta$, so that 
\eqref{def:x0forinteg-2} holds.)

As in Section \ref{sec:Lip}, we construct a new function $f$ as follows. Let $c$ be a positive real number, and $\{a_k\}$ be a sequence in $\mathbb{R}_+$ such that
\begin{equation}\label{asmp:ak}
	c':= c \sum a_k < \frac{\pi}{2}. 
\end{equation} 
Using the sequence $\{x_k:= 2^{-k}\}$\footnote{In fact, we may take $\{x_k\}$ to be any sequence such that the infinite product $\prod \frac{1}{|x-x_k|^{\frac{ca_k}{\pi}}}$ is integrable near the origin. Or else we may also appeal to the Helson-Szeg\"{o} theorem, which implies that $\exp(-Kf(x))$ is an $A_2$-weight on $\RR$, if $\|f\|_{\infty} < \frac{\pi}{2}$ (see \cite[\S 6.21]{Stein}). Thus in particular $\exp(-Kf(x))$ is locally integrable on $\RR$, for any sequence $\{x_k\}$ as long as \eqref{asmp:ak} holds. 
But for simplicity we just take $x_k = 2^{-k}$ and give a self-contained elementary proof. This assumption is only used in the proof of the claim \eqref{eq:argx=0}.},
we define a function $f: \mathbb{R} \to \mathbb{R}$ as follows
\begin{equation}\label{def:f}
	f(x) = c\sum a_k \tilde{H}(x-x_k). 
\end{equation} 
Clearly $f\in C(\RR)$, $f \in C^1 \left(\RR \setminus \left( \{x_k\}\cup\{0\} \right) \right) $ and
\[ \|f\|_{L^\infty} \leq c\sum a_k = c' < \frac{\pi}{2}. \]
Moreover, we can prove the following lemma:
\begin{lemma}\label{lm:Kfnearxk}
	$Kf(x)$ is well-defined and continuous in $\RR \setminus\left( \{x_k\} \cup \{0\}\right)$. Near each $x_k$ we have that
	\begin{equation}\label{eq:nearxk}
		Kf(x) - ca_k K\tilde{H}(x-x_k) = c\sum_{k' \neq k} a_{k'} K\tilde{H}(x-x_{k'}),
	\end{equation} 
	where the right hand side is continuous and bounded (the bound only depends on $\delta_k$ in \eqref{def:deltak}). In particular
	\[ Kf(x) \to -\infty \quad \text{ as } x\to x_k. \]
\end{lemma}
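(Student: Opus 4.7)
The plan is to view $Kf$ as a pointwise-convergent series of Hilbert transforms of the shifted building blocks $\tilde{H}(\cdot - x_k)$, and then apply Lemma \ref{lm:Dinifunction}(3) termwise. Set $f_\ell := c\sum_{k \leq \ell} a_k \tilde{H}(\cdot - x_k)$; by \eqref{asmp:ak} we have $f_\ell \to f$ in $L^\infty(\RR)$, and linearity of $K$ on finite sums gives
\[ Kf_\ell(x) = c\sum_{k \leq \ell} a_k K\tilde{H}(x - x_k). \]
My first step is to show that the partial sums converge uniformly on compact subsets of $\RR \setminus (\{x_k\}\cup\{0\})$ to a continuous function, and my second step is to identify this limit with $Kf(x)$.

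For the uniform convergence, fix a compact $E \subset \RR \setminus (\{x_k\}\cup\{0\})$. Since $x_k \to 0$ and $E$ is separated from $0$ and from each $x_k$, there exists $N = N(E)$ such that for every $k \geq N$ the difference $x - x_k$ stays inside a fixed compact subset of $\RR \setminus \{0\}$ uniformly in $x \in E$; on that set $K\tilde{H}$ is continuous and hence bounded by some $M_E$, so
\[ \Big| c\sum_{k>\ell} a_k K\tilde{H}(x-x_k) \Big| \leq M_E \cdot c\sum_{k>\ell} a_k \longrightarrow 0 \]
uniformly in $x \in E$. The finitely many head terms $k \leq N$ are continuous on $E$, so the limit $c\sum_k a_k K\tilde{H}(\cdot - x_k)$ defines a continuous function on $\RR \setminus (\{x_k\}\cup\{0\})$. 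To identify it with $Kf(x)$ I would split the principal-value integral defining $K(f - f_\ell)(x)$ into a near-$x$ piece, controlled by the Lipschitz modulus of $f - f_\ell$ in a small neighborhood of $x$ (finite and $O(c\sum_{k > \ell} a_k) \to 0$, since $\tilde{H}'$ is bounded on any fixed compact subset of $\RR \setminus \{0\}$), and a far-$x$ piece controlled by $\|f - f_\ell\|_\infty \to 0$ against the $L^1$ kernel $\frac{\chi_{|x-y|\geq \delta}}{x-y} + \frac{\chi_{|y|>1}}{y}$.

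For the behavior near a fixed $x_k$, the decomposition \eqref{eq:nearxk} is the trivial split
\[ Kf(x) - c a_k K\tilde{H}(x - x_k) = c \sum_{k' \neq k} a_{k'} K\tilde{H}(x - x_{k'}). \]
Applying the uniform-convergence argument above with the compact set $E = \{x : |x - x_k| \leq \delta_k/2\}$, which is disjoint from every $x_{k'}$ with $k' \neq k$ by definition of $\delta_k$, the right-hand side is continuous and uniformly bounded on $E$, with a bound depending only on $\delta_k$, $c'$, and the (at most logarithmic) growth of $K\tilde{H}$ at infinity extracted from the analogues of \eqref{eq:x<0} and \eqref{eq:estx>1/2}. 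Combined with $c a_k K\tilde{H}(x - x_k) \to -\infty$ as $x \to x_k$ from Lemma \ref{lm:Dinifunction}(3), this yields $Kf(x) \to -\infty$.

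The main obstacle is the pointwise identification $Kf_\ell(x) \to Kf(x)$, because the BMO convergence noted in the spirit of \eqref{tmp:KBMOlimit} does not pin down the additive constant of a BMO limit. The remedy is exactly the near/far split of $K(f - f_\ell)(x)$ described above, which works at a separated point $x$ because $f - f_\ell$ is differentiable in a neighborhood of $x$ with a Lipschitz constant dominated by the tail $c\sum_{k > \ell} a_k$, so that both contributions vanish as $\ell \to \infty$ and the pointwise value agrees with the series sum.
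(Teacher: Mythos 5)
Your proposal is correct and follows essentially the same route as the paper: the paper likewise writes $Kf$ as the BMO/pointwise limit of $c\sum_{k\le\ell}a_kK\tilde H(\cdot-x_k)$, proves absolute and uniform convergence of the tail on compact sets $E\subset\RR\setminus(\{x_k\}\cup\{0\})$ via the bound $|K\tilde H(y-x_k)|\le|K\tilde H(y)|+1$ for $k$ large, and then isolates the $k$-th term near $x_k$ using $\delta_k$ together with the estimates on $K\tilde H$ away from the origin. Your near/far splitting of $K(f-f_\ell)(x)$ to pin down the pointwise value of the BMO limit is a valid (and slightly more careful) treatment of a step the paper passes over implicitly, but it does not change the argument's structure.
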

\begin{proof}
Since
\[ c \sum_{k\leq \ell} a_k \tilde{H} (x-x_k) \to f(x) \quad \text{ in } L^\infty(\RR) \]
and $f \in C_b(\RR)$, we have that
\begin{equation}\label{eq:Kfsum}
	Kf(x) = \lim_{\ell\to +\infty} K \left(c\sum_{k\leq \ell} a_k \tilde{H}(x-x_k) \right) = \lim_{\ell\to +\infty} c\sum_{k\leq \ell} a_k K \tilde{H}(x-x_k), 
\end{equation} 
where the limit is taken in the BMO space and $Kf \in VMO(\RR)$.
On $\RR\setminus\{x_k\}$, for each $k$ the function $K\tilde{H}(\cdot - x_k)$ is pointwise defined. We claim that the limit on the right hand side of \eqref{eq:Kfsum} is well-defined and gives a continuous function on $\RR \setminus\left( \{x_k \} \cup \{0\} \right)$.

Let $x\in \RR \setminus\{0\}$ be an arbitrary point.
(If $x=x_{k}$ for any $k$, we just remove the $k$-th term and consider the summation $\sum_{k'\neq k}$, so we also have that $x\neq x_k$ for every $k$. See \eqref{eq:Kfnearxk-2}.) 
Recall that $K\tilde{H}$ is a continuous function in $\RR \setminus \{0\}$, it is uniformly continuous on compact subsets of $\RR \setminus\{0\}$. Let $E$ be a compact subset of $\RR \setminus\{0\}$ containing $x$, such that $E \cap \{x_k: k\in \NN\} = \emptyset$. We have that for any $y\in E$,
\[ K\tilde{H}(y-x_k) \to K\tilde{H}(y), \quad \text{ as } x_k \to 0, \]
and the convergence is uniform.
In particular, there exists $k_0\in \NN$ depending on $E$, such that for any $k\geq k_0$ and $y\in E$, we have that
\[ \left| K\tilde{H}(y-x_k) \right| \leq \left| K\tilde{H}(y) \right| + 1. \]
Hence for any $m\geq \ell \geq k_0$, we have
\[ c\sum_{k = \ell}^m a_k \left| K\tilde{H}(y-x_k) \right| \leq \left( c\sum_{k=\ell}^m a_k \right) \left( \left| K\tilde{H}(y) \right| + 1 \right)<+\infty. \]
Therefore as an absolutely convergent series of continuous functions,
\[ c\sum a_k K\tilde{H}(y-x_k) = \lim_{\ell \to +\infty} c\sum_{k \leq \ell} a_k K\tilde{H}(y-x_k)  \]
is well-defined and continuous at $x$.

Moreover near each $x_k$, we have
\begin{equation}\label{eq:Kfnearxk-2}
	Kf(x) = c a_k K\tilde{H}(x-x_k) + \lim_{\ell \to +\infty} c \sum_{k' \leq \ell \atop{k' \neq k} } a_{k'} K\tilde{H}(x-x_{k'}). 
\end{equation} 
Since $x_{k'} \neq x_k$ for every $k'\neq k$, we have that
\begin{equation}\label{def:deltak}
	\delta_k := \inf\{ |x_{k'}-x_k|: k'\in \NN \text{ and } k'\neq k \}>0. 
\end{equation} 
Then as long as $|x-x_k|< \delta_k/2$, since 
\[ |x-x_{k'}| \geq |x_k-x_{k'}| - |x-x_k| > \delta_k/2, \]
%
by the estimates of $K\tilde{H}$ away from the origin in Lemmas \ref{lm:Dinifunction} and \ref{lm:generalDinifunction} as well as the assumption \eqref{asmp:ak}, we can show that
\[ Kf(x) - ca_k K\tilde{H}(x-x_k) = \lim_{\ell\to +\infty} c\sum_{k' \leq \ell \atop{k'\neq k}} a_{k'} K\tilde{H}(x-x_{k'}) \]
is well-defined and continuous at $x_k$, as an absolutely convergent series of continuous functions.
In particular,
\[ \text{ as } x\to x_k, \text{ the limit of } Kf(x) - ca_k K\tilde{H}(x-x_k) \text{ exists and is finite}. \]
Therefore $Kf(x) \to -\infty$ as $x\to x_k$ for every $k\in \NN$.
\end{proof}

\textit{Proof of Theorem \ref{thm:main}}.
We can construct a Lipschitz domain $D = \Phi(\RR^2_+)$ using $f, Kf$ as in Section \ref{sec:Lip}. (Again because $K\tilde{H}$ has logarithmic growth at infinity, by \eqref{eq:x<0}, \eqref{eq:estx>1/2} and the analogous estimates in Lemma \ref{lm:generalDinifunction}, the Poisson integral of $Kf$ is well-defined.) Since $f\in C_b(\RR)$, its Poisson integral $V(z)$ converges to $f(x)$ for every $x\in \partial \RR^2_+$; the Poisson integral $W(z)$ of $Kf(x)$ converges to $Kf(x)$ for every $x\in \RR \setminus \left( \{x_k\}\cup \{0\} \right)$, as in the paragraph after \eqref{eq:decayKf}.

Moreover, we claim that for every $(x,t) \in \RR^2_+$, 
\begin{align}
	W(x,t) = P_t \ast Kf(x) & = \lim_{\ell\to+\infty} P_t \ast \left(c \sum_{k\leq \ell} a_k K\tilde{H}(x-x_k) \right) \label{cl:PoissonintegcvC1} \\
	& = \lim_{\ell\to+\infty}  c \sum_{k\leq \ell} a_k P_t \ast K\tilde{H}(x-x_k).\nonumber
\end{align}
(In the last equality, we simply use the linearity of the Poisson integral operator.)
The proof is by studying the Poisson integral in the regions close to $x$ and away from $x$, similar to the proof of the analogous claim \eqref{cl:Poissonintegcv} in Section \ref{sec:Lip}. So we only sketch the key steps here. Let
\[ \delta_0 := \inf\{|(x,t) - (x_k, 0)|: k\in \NN \}>0. \]
For the Poisson integral on the region which is $2\delta_0$-away from $x$ (i.e. the $P_t^1$ term in \eqref{tmp:Ptcloseaway}), we use the lower and upper bounds of $K\tilde{H}$ proven in Lemma \ref{lm:Dinifunction}, the continuity of $K\tilde{H}$ at $x_0$ and $1/2$, combined with \eqref{eq:bdthetader}, to get that
\[ \left|K\tilde{H}(y) \right| \lesssim \log(|y|+1) + \log \frac{1}{\delta_0} + \log \frac{1}{\frac12 - x_0} + \log \frac{1}{x_0}  +  \frac{1}{\delta_0} + Kf(x_0) + Kf\left(\frac12\right), \]
for any $y\in \RR$ with $|y| > \delta_0$. On the other hand, by the estimates \eqref{eq:x<0}, \eqref{eq:x>0lb} and \eqref{eq:x>0ub}, we have that $K\tilde{H}$ is square-integrable near the origin. This can be used to estimate the Poisson integral on the region which is $2\delta_0$-close to $x$ (i.e. the $P_t^2$ term in \eqref{tmp:Ptcloseaway}). This finishes the proof of \eqref{cl:PoissonintegcvC1}.

In particular, as $z=x+it$ converges to $x_k$, by Lemma \ref{lm:Kfnearxk} and the property of the Poisson integral for bounded and continuous functions,
we have that 
\[ W(x,t) - ca_k \cdot P_t \ast \left( K \tilde{H}(x-x_k) \right) \quad \text{ is continuous and bounded}. \]
Combined with the estimates of $K\tilde{H}$ near the origin (see the estimates \eqref{eq:Kfx<0}, \eqref{eq:Htmidt1}, \eqref{eq:Htmidt2},  as well as the definition in \eqref{eq:Htmidt4}), we have
\[ W(x,t) + \frac{ca_k }{\pi} \cdot P_t \ast \left( \int_0^{x_0} \frac{\tilde{\theta}(y)}{y-\cdot} \, dy \right)(x-x_k) \quad \text{ is continuous and bounded near } x_k. \]
Recall that we have shown that the function $x \mapsto \int_0^{x_0} \frac{\tilde{\theta}(y)}{y-x} \, dy$ is well-defined as a principal value and continuous in $\RR \setminus\{0\}$ (in particular, recall that we have proven its continuity at $x_0$ in the proof of Lemma \ref{lm:Dinifunction}). So in order to estimate the Poisson integral
\[ P_t \ast \left( \int_0^{x_0} \frac{\tilde{\theta}(y)}{y-\cdot} \, dy \right)(x) \]
near the origin, let us focus on $\int_0^{x_0} \frac{\tilde{\theta}(y)}{y-x} \, dy$ near the origin. When $x<0$ (and $|x|< \epsilon_0$ for some sufficiently small $\epsilon_0$), we have that
\begin{equation}\label{tmp:logleft0}
	0< \int_0^{x_0} \frac{\tilde{\theta}(y)}{y-x} \, dy \leq \int_0^{x_0} \frac{1}{y-x} \, dy = -\log|x| + \log|x_0-x| < \log \frac{1}{|x|}. 
\end{equation} 
When $x>0$ (and $|x|<\epsilon_0 $), the estimate \eqref{eq:Htmidt3} is not enough for our purpose; instead, we claim that
\begin{equation}\label{cl:theta0x0}
	0 < \int_0^{x_0} \frac{\tilde{\theta}(y) - \tilde{\theta}(x)}{y-x} \, dy \leq C + \frac12 \log \frac{1}{|x|}.
\end{equation}
In fact, it easily follows from \eqref{eq:Htmid3}, \eqref{eq:Htmid4} and \eqref{eq:bdthetader} that
\begin{equation}\label{tmp:thetabeforex}
	0 < \int_0^x \frac{\tilde{\theta}(y) - \tilde{\theta}(x)}{y-x} \, dy \leq C_1 < +\infty. 
\end{equation} 
On the other hand, since $x$ is sufficiently small, we have $2x< x_0$ and hence
\begin{align}
	0 < \int_{x}^{x_0} \frac{\tilde{\theta}(y) - \tilde{\theta}(x)}{y-x} \, dy & = \int_{x}^{2x} \frac{\tilde{\theta}(y) - \tilde{\theta}(x)}{y-x} \, dy + \int_{2x}^{x_0} \frac{\tilde{\theta}(y) - \tilde{\theta}(x)}{y-x} \, dy \nonumber \\
	& \leq \int_x^{2x} \sup_{[x,2x]} \tilde{\theta}' \, dy + \int_{2x}^{x_0} \frac{\tilde{\theta}(y)}{y-x} \, dy \nonumber \\
	& \leq C_2 + \tilde{\theta}(x_0) \left[-\log|x| + \log|x_0-x| \right] \nonumber \\
	& \leq C_2 + \frac12 \log \frac{1}{|x|}.\label{tmp:thetaafterx}
\end{align}
The claim \eqref{cl:theta0x0} then follows by combining \eqref{tmp:thetabeforex} and \eqref{tmp:thetaafterx}.
Since
\[ 0\leq \tilde{\theta}(x) \left(\log|x_0-x| - \log|x| \right) \leq \frac12 \log \frac{1}{|x|}, \]
it follows from \eqref{eq:Htmidt4} that
\begin{equation}\label{tmp:logright0}
	0< \int_0^{x_0} \frac{\tilde{\theta}(y)}{y-x} \, dy = \int_0^{x_0} \frac{\tilde{\theta}(y) - \tilde{\theta}(x)}{y-x} \, dy + \tilde{\theta}(x) \left(\log|x_0-x| - \log|x| \right) \leq C+ \log \frac{1}{|x|}.  
\end{equation} 
Combining \eqref{tmp:logleft0} and \eqref{tmp:logright0}, we have that
\[ 0\leq P_t \ast \left( \mathbbm{1}_{|\cdot|< \epsilon_0} \int_0^{x_0} \frac{\tilde{\theta}(y)}{y-\cdot} \, dy \right)(x) \leq C+ P_t \ast \left(\mathbbm{1}_{|\cdot|< \epsilon_0} \log \frac{1}{|\cdot|} \right)(x). 
\] 
Note that 
\begin{align*}
	P_t \ast \left(\mathbbm{1}_{|\cdot|< \epsilon_0} \log \frac{1}{|\cdot|} \right)(x) & = P_t \ast \left( \log \frac{1}{|\cdot|} \right)(x) - P_t \ast \left(\mathbbm{1}_{|\cdot|\geq \epsilon_0} \log \frac{1}{|\cdot|} \right)(x) \\
	& = \log \frac{1}{|(x,t)|} - P_t \ast \left(\mathbbm{1}_{|\cdot|\geq \epsilon_0} \log \frac{1}{|\cdot|} \right)(x);
\end{align*} 
and $P_t \ast \left(\mathbbm{1}_{|\cdot|\geq \epsilon_0} \log \frac{1}{|\cdot|} \right)(x)$ is bounded when $(x,t)$ is sufficiently close to the origin, since $\mathbbm{1}_{|\cdot|\geq \epsilon_0} \log \frac{1}{|\cdot|}$ is continuous at the origin and thus
\[ P_t \ast \left(\mathbbm{1}_{|\cdot|\geq \epsilon_0} \log \frac{1}{|\cdot|} \right)(x) \to 0 \quad \text{ as } (x,t) \to 0. \]
Therefore when $(x,t)$ is close to the origin,
\begin{align*}
	P_t \ast \left( \int_0^{x_0} \frac{\tilde{\theta}(y)}{y-\cdot}(x) \, dy \right) & = P_t \ast \left(\mathbbm{1}_{|\cdot|<\epsilon_0} \int_0^{x_0} \frac{\tilde{\theta}(y)}{y-\cdot} \, dy \right)(x) + P_t \ast \left( \mathbbm{1}_{|\cdot|\geq\epsilon_0} \int_0^{x_0} \frac{\tilde{\theta}(y)}{y-\cdot} \, dy \right)(x) \\
	& \leq P_t \ast \left(\mathbbm{1}_{|\cdot|<\epsilon_0} \int_0^{x_0} \frac{\tilde{\theta}(y)}{y-\cdot} \, dy \right)(x) + C_3 \\
	& \leq \log \frac{1}{|(x,t)|} + C'.
\end{align*} 
Hence we can use the same argument as in Section \ref{sec:Lip} to show that for any $z, z'$ sufficiently close to $x_k$,
\[ |\Phi(z) - \Phi(z')| \leq \int_{\gamma_{z,z'}} |G(\omega)| = \int_{\gamma_{z,z'}} \exp \left(-W(y,s) \right) \to 0, \quad \text{ as } z,z' \to x_k. \]
That is to say $\Phi(z)$ is continuous at $x_k$.

Next we show that $\Phi(z)$ is also continuous at the origin. To that end, we need to estimate $W(x,t)$ near the origin. 
Since $x_k \to 0$, there exists $k_0 \in \NN$ such that $|x_k| < \epsilon_0/2$ for every $k\geq k_0$.  Then
\begin{align*}
	-W(x,t) & = -\sum ca_k P_t \ast K\tilde{H}(x-x_k) \\
	& = -\sum_{k<k_0} ca_k P_t \ast   K\tilde{H}(x-x_k)  -\sum_{k \geq k_0} ca_k P_t \ast  K\tilde{H}(x-x_k).
\end{align*}
As before,
\begin{align*}
	-\sum_{k \geq k_0} ca_k P_t \ast K\tilde{H}(x-x_k)  & \approx 1 + \sum_{k\geq k_0} \frac{ca_k}{\pi}\cdot  P_t \ast \left(\int_0^{x_0} \frac{\tilde{\theta}(y)}{y-\cdot } \, dy \right)(x-x_k) \\
	& \approx 1 + \sum_{k\geq k_0} \frac{ca_k}{\pi} \cdot P_t \ast \left(\mathbbm{1}_{|\cdot|<\epsilon_0/2}  \int_0^{x_0} \frac{\tilde{\theta}(y)}{y-\cdot} \, dy \right)(x-x_k) \\
	& \leq C''+ \sum_{k\geq k_0} \frac{ca_k}{\pi} \cdot \log \frac{1}{|(x,t)-(x_k,0)|}.
\end{align*}
The constants in the inequality only depends on $c' = c\sum a_k $ and the constants $C_1, C_2$ above; in particular they are independent of $(x,t)$, for $(x,t)$ sufficiently close to the origin.
Again we can use the same argument as in Section \ref{sec:Lip} to show that for any $z, z'$ sufficiently close to the origin,
\[ |\Phi(z) - \Phi(z')| \leq \int_{\gamma_{z,z'}} |G(\omega)| = \int_{\gamma_{z,z'}} \exp \left(-W(y,s) \right) \to 0, \quad \text{ as } z,z' \to 0. \]
That is to say $\Phi(z)$ is continuous at the origin. To sum up, $\Phi: \RR^2_+ \to D$ extends continuously to $\overline{\RR^2_+} \to \overline{D}$. Moreover, by the same argument as in Section \ref{sec:Lip}, it is a homeomorphism.

Note that in proving $\Phi(\infty) = \infty$, we no longer have an explicit formula for $Kf(x)$ in order to estimate the growth of the Poisson integral $W(z)$ of $Kf(x)$ at infinity.
However, by \eqref{eq:x<0} and \eqref{eq:estx>1/2}, we still have that we have that $K\tilde{H}(x)$ grows like $\log|x|$ whenever $x\ll 0$ and $x\gg 1/2$. More precisely, we have that
\begin{equation}\label{def:KtildeHloggrowth}
	K\tilde{H}(x) = K H(x) + h(x) = \frac{1}{\pi} \log|x| + h(x),  
\end{equation} 
where $h(x)$ is a bounded and continuous function away from the origin.  Near the origin $h(x)$ can be written as $h_0(x) + h_+(x)$, with $h_0$ being a continuous and bounded function, and
\[ h_+(x) = \frac{1}{\pi} \int_0^{x_0} \frac{\tilde{\theta}(y)}{x-y} \, dy - \frac{1}{\pi} \log|x| = \frac{1}{\pi} \log\frac{1}{|x|} - \frac{1}{\pi} \int_0^{x_0} \frac{\tilde{\theta}(y)}{y-x} \, dy.  \]
Recall that we have shown before that
\[ 0< \int_0^{x_0} \frac{\tilde{\theta}(y)}{y-x} \, dy \leq \log\frac{1}{|x|} + C, \quad \text{ whenever } |x|< \epsilon_0. \]
Thus
\[ -\frac{C}{\pi} \leq h_+(x) < \frac{1}{\pi} \log \frac{1}{|x|}. \]
Combining the above, we have that
\begin{align*}
	P_t \ast h(x) \leq C + P_t \ast\left(\mathbbm{1}_{|\cdot|< \epsilon_0} h_+(\cdot) \right)(x),
\end{align*}
where
\begin{align*}
	P_t \ast\left(\mathbbm{1}_{|\cdot|< \epsilon_0} h_+(\cdot) \right)(x) & = \frac{1}{\pi} \int_{|y|<\epsilon_0} \frac{t}{(x-y)^2+t^2} h_+(y) \, dy \\
	& \lesssim \left(\int_{|y|<\epsilon_0} \left( \frac{t}{(x-y)^2+t^2} \right)^2 dy \right)^{1/2} \left\|\log|x| \right\|_{L^2([-\epsilon_0, \epsilon_0])}. \\
	& \leq \frac{\sqrt{2\epsilon_0}}{t} \left\|\log|x| \right\|_{L^2([-\epsilon_0, \epsilon_0])}.
\end{align*}
If $|x| \geq 2\epsilon_0$, we have
\begin{align*}
	\left(\int_{|y|<\epsilon_0} \left( \frac{t}{(x-y)^2+t^2} \right)^2 dy \right)^{1/2} \lesssim \sqrt{2\epsilon_0} \cdot \min\left\{ \frac{t}{|x|^2}, \frac{1}{t} \right\};
\end{align*}
if $|x| < 2\epsilon_0$, we have 
\begin{align*}
	\left(\int_{|y|<\epsilon_0} \left( \frac{t}{(x-y)^2+t^2} \right)^2 dy \right)^{1/2} \leq \frac{\sqrt{2\epsilon_0}}{t}.
\end{align*}
In particular, whenever $|(x,t)|> 3\epsilon_0$, there is a uniform lower bound for $-P_t \ast h(x)$ (depending only on $\epsilon_0$).
Combining \eqref{cl:PoissonintegcvC1} and \eqref{def:KtildeHloggrowth}, we get the following lower bound for $(x,t) \in \RR^2_+$ with $|(x,t)| > 3\epsilon_0$:
 \begin{align*}
 	|G(x,t)| = \exp \left(-W(x,t) \right) & = \exp \left(-c \sum a_k P_t \ast  K\tilde{H}(x-x_k) \right) \\
 	& = \exp \left(-\frac{c}{\pi} \sum a_k \log|(x,t)-(x_k,0)|  \right) \cdot \exp \left(-c \sum a_k P_t \ast h(x-x_k)  \right) \\
 	& = \prod \left|(x,t)-(x_k,0)\right|^{-\frac{c}{\pi} a_k}  \cdot  \exp \left(-c \sum a_k P_t \ast h(x-x_k)  \right) \\
 	& \gtrsim (|(x,t)|+1)^{-\frac{c'}{\pi}}
 \end{align*}
 where the constant depends on the uniform lower bound of $-P_t \ast h(x)$  and $c'= c\sum a_k$.
 Therefore by the same argument as in \eqref{tmp:provePhiinfty}, we have that $\Phi(\infty) = \infty$.
 In particular $\partial D = \Phi(\partial \RR^2_+)$, where $\partial D$ denotes the topological boundary of $D$ in $\RR^2$, not the boundary in the Riemann sphere.
 
 
As in Section \ref{sec:Lip}, we know that $G(x)$ is continuous on $\RR \setminus \left( \{x_k\}\cup\{0\}\right)$, and thus
\[ \Phi'(x) \text{ exists and is equal to } G(x). \]
Next, we will show that $\exp(if(x))$ is a unit tangent vector field to $\partial D$ at $p=\Phi(x)$ for every $x\in \RR$ (including when $x=x_k$ and $x=0$). Thus the property $f\in C_b(\RR) \cap C^1(\RR\setminus\left( \{x_k\} \cup \{0\}\right))$ implies that $\partial D$ is $C^1$-regular everywhere, and it is also $C^2$ regular everywhere except at the countably infinite set $\{\Phi(x_k): k\in \NN\}\cup\{\Phi(0)\}$. (At each $x_k$, the modulus of continuity for $f(x)$ is comparable to $\theta(\cdot)$, which fails the Dini condition.) Recall that we relabel the function defined in \eqref{def:Htilde} of Lemma \ref{lm:generalDinifunction} as $\tilde{H}$.
 Hence by the definition of 
 $f(x)$ via $\tilde{H}$ in \eqref{def:f}, it is clear that
\[ f(x) = 0 \text{ when } x < 0, \]
and 
\[ f(x) = c\sum a_k \tilde{H}(x-x_k) = c\sum a_k = c' \text{ when } x \geq  2. \]
Therefore $\partial D$ is flat on $\{\Phi(x): x< 0 \}$ and $\{\Phi(x): x\geq 2\}$.

We claim that for each $k$, $|\Phi'(x_k)| =+ \infty$ and $\arg \Phi'(x_k) = f(x_k)$, in the following sense:
\begin{equation}\label{cl:xk}
	\left| \frac{\Phi(x_k + \epsilon) - \Phi(x_k)}{\epsilon} \right| \to +\infty \text{ and } \arg \frac{\Phi(x_k + \epsilon) - \Phi(x_k)}{\epsilon} \to f(x_k), \quad \text{ as } \epsilon \to 0. 
\end{equation} 
In the above notation, we take the principal branch of the argument function (in fact the argument of $G(z)$ always lies in $(-\pi/2, \pi/2)$, by the bound on $\|f\|_{L^\infty}$).
Recall that $\Phi$ extends continuously to the boundary, and on $\RR \setminus\left(\{ x_k\} \cup\{0\} \right)$ we have
\[ G(x) = \exp\left( -Kf(x) \right) \exp\left( if(x) \right). \]
Therefore
\[ \frac{\Phi(x_k + \epsilon) - \Phi(x_k)}{\epsilon}  = \frac{1}{\epsilon} \int_{x_k}^{x_k+\epsilon} G(x) = \frac{1}{\epsilon} \int_{x_k}^{x_k+\epsilon} \exp\left( -Kf(x) \right) \exp \left( if(x) \right). \]
 By \eqref{eq:nearxk}, when $|\epsilon|$ is sufficiently small we have
\begin{align}
	& \frac{\Phi(x_k + \epsilon) - \Phi(x_k)}{\epsilon} \nonumber \\
	& = \frac{1}{\epsilon} \int_{x_k}^{x_k+\epsilon} \exp\left( -Kf(x) \right) \exp \left( if(x) \right) \nonumber \\
	& = \frac{1}{\epsilon} \int_{x_k}^{x_k+\epsilon} \exp\left( -ca_k K\tilde{H}(x-x_k) \right) \cdot \exp\left( -c\sum_{k' \neq k} a_{k'} K\tilde{H}(x - x_{k'}) \right)  \exp \left( i f(x) \right) \nonumber \\
	& = \exp\left(if(x_k) \right) \exp\left( -c \sum_{k' \neq k} a_{k'} K\tilde{H}(x_k - x_{k'}) \right) \times \nonumber \\
	& \qquad\quad \frac{1}{\epsilon} \int_{x_k}^{x_k+\epsilon} \exp\left( -ca_k K\tilde{H}(x-x_k) \right) \exp\left( \rho(x-x_k) \right)  \exp \left( i \alpha(x-x_k) \right)\label{eq:Phiderxk1}
\end{align}
where we denote the real-valued functions
\[ \rho(\tau) := -c \sum_{k' \neq k} a_{k'} K\tilde{H}(\tau +x_k - x_{k'}) + c \sum_{k' \neq k} a_{k'} K\tilde{H}(x_k - x_{k'}), \]
\[ \alpha(\tau): = f(\tau +x_k) - f(x_k). \]
By the continuity of the functions $\sum_{k'\neq k} a_{k'} K\tilde{H}(x-x_{k'})$ and $f(x)$ at $x_k$, we have that
\begin{equation}\label{eq:cpmod}
	\rho(\tau), \alpha(\tau) \to 0 \text{ as } \tau\to 0. 
\end{equation} 

We will show that $\exp\left( -ca_k K\tilde{H}(x) \right)$ is integrable near the origin\footnote{The Helson-Szeg\"{o} theorem implies that the function $\exp(u(x) + Kv(x))$ is an $A_2$ weight on $\RR$, if $u, v\in L^\infty$ and $\|u\|_\infty < \pi/2$ (see \cite[\S 6.21]{Stein}). In particular, it directly implies that $\exp\left(-ca_k K\tilde{H}(x) \right)$ is locally integrable since $ca_k < \frac{\pi}{2}$. However, in our case there is a more elementary proof of the integrability which also shows \eqref{eq:Phiderxk3}, we present that elementary argument here to make it self-contained.}, and moreover, as $\epsilon \to 0$ ($\epsilon$ can be negative),
\begin{equation}\label{eq:Phiderxk3}
	\frac{1}{\epsilon} \int_{0}^{\epsilon} \exp \left(-c a_k K\tilde{H}(x) \right)\, dx \to +\infty. 
\end{equation}  
Once that is proven, by \eqref{eq:cpmod} and by considering the real part and complex part separately it follows easily that
\begin{align}
	& \dfrac{\frac{1}{\epsilon} \int_{x_k}^{x_k+\epsilon} \exp\left( -ca_k K\tilde{H}(x-x_k) \right) \exp\left( \rho(x-x_k) \right)  \exp \left( i \alpha(x-x_k) \right)}{\frac{1}{\epsilon} \int_{x_k}^{x_k+\epsilon} \exp\left( -ca_k K\tilde{H}(x-x_k) \right) } \nonumber \\
	& = \dfrac{ \int_{x_k}^{x_k+\epsilon} \exp\left( -ca_k K\tilde{H}(x-x_k) \right) \exp\left( \rho(x-x_k) \right)  \exp \left( i \alpha(x-x_k) \right)}{ \int_{x_k}^{x_k+\epsilon} \exp\left( -ca_k K\tilde{H}(x-x_k) \right) } \nonumber \\
	& \to 1 \quad \text{ as } \epsilon \to 0.\label{eq:Phiderxk2}
\end{align}
Therefore combining \eqref{eq:Phiderxk1}, \eqref{eq:Phiderxk2} and \eqref{eq:Phiderxk3}, we conclude the proof of \eqref{cl:xk}.

We first consider the case when $\epsilon<0$. Assume without loss of generality that $|\epsilon| < x_0/2$. By \eqref{eq:Kfx<0} and the definition of $\tilde{H}(x)$ in \eqref{def:Htilde}, for $x<0$ we have\footnote{Here we abuse the notation $\approx$: we write $\approx 1$ to indicate the remainder term is close to some fixed constant, but there is no constant multiple of the term $\int_0^{x_0} \frac{\tilde{\theta}(y)}{y-x} \, dy$ (otherwise there would be a constant multiple of $\frac{ca_k}{\pi}$ in the right hand side of \eqref{eq:expbdx<0}). See also the lower bounds in \eqref{tmp:LBinttheta} and \eqref{tmp:LBinttheta-2}.}
\begin{equation}\label{tmp:integxk}
	- \pi \cdot K\tilde{H}(x) = \int_0^{x_0} \frac{\tilde{\theta}(y)}{y-x} \, dy + \int_{x_0}^{1/2} \frac{g(y)}{y-x} \, dy - \log \left( \frac12 - x \right) \approx \int_0^{x_0} \frac{\tilde{\theta}(y)}{y-x} \, dy + 1. 
\end{equation} 
Let $k_0, N_0, \ell$ be the natural numbers such that 
\begin{equation}\label{def:k0N0ell}
	2^{-k_0} \leq x_0 < 2^{-k_0+1}, \quad -2^{-N_0+1} < \epsilon \leq -2^{-N_0}, \quad -2^{-\ell+1} < x\leq -2^{-\ell}. 
\end{equation} 
The assumption that $|x| \leq |\epsilon| < x_0$ guarantees that $\ell \geq N_0 > k_0$.
We have\footnote{In fact, to show $\exp(-ca_k K\tilde{H}(x) )$ is integrable on $[\epsilon,0]$, it suffices to use the rough estimate $0< \int_0^{x_0} \frac{\tilde{\theta}(y)}{y-x} \, dy \leq 
\log \frac{1}{|x|} $ (see \eqref{tmp:logleft0}), and the fact that $\frac{c}{\pi} a_k <1$. Considering that $\ell \approx \log \frac{1}{|x|}$ and $\tilde{\theta}(r) < 1$ (for sufficiently small $r$), the estimate in \eqref{tmp:UBintheta} is clearly much more precise. We prove \eqref{tmp:UBintheta} here, because it, combined with the lower bound in \eqref{tmp:LBinttheta}, essentially gives us the precise value of the integral $\int_0^{x_0} \frac{\tilde{\theta}(y)}{y-x} \, dy$.}
\begin{align}
	\int_0^{x_0} \frac{\tilde{\theta}(y)}{y-x} \, dy \leq \int_0^{2^{-k_0+1}} \frac{\tilde{\theta}(y)}{y-x} \, dy & = \sum_{i=k_0}^\infty \int_{2^{-i}}^{2^{-i+1}} \frac{\tilde{\theta}(y)}{y-x} \, dy \nonumber \\
	& \leq \sum_{i=k_0}^\infty \tilde\theta(2^{-i+1}) \cdot \frac{2^{-i}}{2^{-i}+2^{-\ell}} \nonumber \\
	& = \sum_{i=k_0}^\infty \tilde\theta(2^{-i+1}) \cdot \frac{1}{1+2^{i-\ell}} \nonumber \\
	& = \sum_{i=k_0}^{\ell} \tilde\theta(2^{-i+1}) \cdot \frac{1}{1+2^{i-\ell}} + \sum_{i=\ell+1}^\infty \tilde\theta(2^{-i+1}) \cdot \frac{1}{1+2^{i-\ell}} \nonumber \\
	& \leq \sum_{i=k_0}^{\ell} \tilde\theta(2^{-i+1}) + \sum_{j=1}^\infty \tilde{\theta}(2^{-j-\ell+1} ) \cdot \frac{1}{1+2^j} \nonumber \\
	& \leq \sum_{i=k_0}^{\ell+1} \tilde\theta(2^{-i+1}).\label{tmp:UBintheta}
\end{align}
Hence \eqref{tmp:integxk} implies that
\begin{equation}\label{eq:expbdx<0}
	\exp\left(-ca_k K\tilde{H}(x) \right) \lesssim \exp \left(\frac{ca_k}{\pi} \sum_{i=k_0}^{\ell+1} \tilde\theta(2^{-i+1}) \right). 
\end{equation} 
Therefore
\begin{align}
	\int_{\epsilon}^0 \exp\left( - ca_k K\tilde{H}(x) \right) dx  & \leq \int_{-2^{-N_0+1}}^0 \exp\left( - ca_k K\tilde{H}(x) \right) dx \nonumber \\
	& \lesssim \sum_{\ell=N_0}^\infty \int_{-2^{-\ell+1}}^{-2^{-\ell}} \exp \left( \frac{ca_k}{\pi} \sum_{i=k_0}^{\ell+1} \tilde\theta(2^{-i+1}) \right) dx \nonumber \\
	& = \sum_{\ell=N_0}^\infty \exp \left( \frac{ca_k}{\pi} \sum_{i=k_0}^{\ell+1} \tilde\theta(2^{-i+1}) \right) \cdot 2^{-\ell} \nonumber \\
	& \lesssim_{a_k, x_0}  \sum_{\ell=N_0}^\infty \exp \left( \frac{ca_k}{\pi} \sum_{i=k_0}^{\ell+1} \left( \tilde\theta(2^{-i+1}) - \frac{\pi}{ca_k} \log 2 \right) \right).\label{tmp:integxk<0}
\end{align}
Since $ca_k< \pi/2$, 
we can choose $x_0$ so that for $\beta \in (0,1)$ fixed,
\begin{equation}\label{def:x0forinteg}
	\theta(8x_0) \leq 2(1-\beta)\log 2 < (1-\beta) \frac{\pi}{ca_k} \log 2\quad \text{ for every } k. 
\end{equation} 
Thus for every $i\geq k_0$, we have that
\[ \tilde\theta(2^{-i+1}) \leq \tilde\theta(2^{-k_0+1}) \leq \tilde{\theta}(2x_0) \leq \theta(8x_0) \leq (1-\beta) \frac{\pi}{ca_k} \log 2, \]
and thus
\begin{align*}
	\int_{\epsilon}^0 \exp\left( - ca_k K\tilde{H}(x) \right) dx \lesssim \sum_{\ell= N_0}^\infty \exp \left(-\ell \cdot\beta \log 2 \right) \lesssim 2^{-\beta N_0} \lesssim |\epsilon|^\beta < +\infty.
\end{align*}
In particular $\exp(-ca_k K\tilde{H}(x))$ is integrable on $[\epsilon,0]$.
On the other hand, as in \eqref{tmp:UBintheta} we can also get a lower bound:
\begin{equation}\label{tmp:LBinttheta}
	\int_0^{x_0} \frac{\tilde{\theta}(y)}{y-x} \, dy \geq \frac14 \sum_{i=k_0+1}^\ell \tilde\theta(2^{-i}). 
\end{equation} 
Hence
\begin{align*}
	\int_{\epsilon}^0 \exp\left( - ca_k K\tilde{H}(x) \right) dx & \geq \int_{-2^{-N_0+1}}^0 \exp\left( -ca_k K\tilde{H}(x) \right) dx \\
	& \gtrsim \sum_{\ell=N_0+1}^\infty  \exp \left( \frac{ca_k}{4\pi} \sum_{i=k_0+1}^\ell \tilde{\theta}(2^{-i}) \right) \cdot 2^{-\ell} \\
	& = \exp \left( \frac{ca_k}{4\pi} \sum_{i=k_0+1}^{N_0} \tilde{\theta}(2^{-i}) \right) \cdot \sum_{\ell = N_0+1}^\infty \exp \left( \frac{ca_k}{4\pi} \sum_{i=N_0+1}^{\ell} \tilde{\theta}(2^{-i}) \right) 2^{-\ell} \\
	& \geq 2^{-N_0} \cdot \exp \left( \frac{ca_k}{4\pi} \sum_{i=k_0+1}^{N_0} \tilde{\theta}(2^{-i}) \right).
\end{align*}
Since $|\epsilon| \approx 2^{-N_0}$, it follows that
\begin{align*}
	\frac{1}{|\epsilon|} \int_{\epsilon}^0 \exp\left( - ca_k K\tilde{H}(x) \right) dx \gtrsim \exp \left( \frac{ca_k}{4\pi} \sum_{i=k_0+1}^{N_0} \tilde{\theta}(2^{-i}) \right).
\end{align*}
Recall that
\begin{align*}
	\sum_{i=k_0+1}^{N_0} \tilde{\theta}(2^{-i}) \geq \sum_{i=k_0+1}^{N_0} \theta(2^{-i}) \geq \sum_{i=k_0+1}^{N_0} \int_{2^{-i-1}}^{2^{-i}} \frac{\theta(r)}{r} dr  = \int_{2^{-N_0-1}}^{2^{-k_0-1}} \frac{\theta(r)}{r} dr \geq \int_{|\epsilon|/2}^{x_0/4} \frac{\theta(r)}{r} dr \to +\infty,
\end{align*}
as $|\epsilon| \to 0$, it follows that
\[ \frac{1}{|\epsilon|} \int_{\epsilon}^0 \exp\left( - ca_k K\tilde{H}(x) \right) dx \to +\infty, \quad \text{ as } \epsilon \to 0-. \]
This finishes the proof of \eqref{eq:Phiderxk3} for the case $\epsilon<0$.

We next consider the case $\epsilon>0$. Assume without loss of generality that $0<\epsilon < x_0/4$. As in the previous case, let $k_0, N_0, \ell$ be the natural numbers such that 
\begin{equation}\label{tmp:dyadicscale}
	2^{-k_0} \leq x_0 < 2^{-k_0+1}, \quad 2^{-N_0} \leq \epsilon < 2^{-N_0+1}, \quad 2^{-\ell} \leq x < 2^{-\ell+1}. 
\end{equation} 
By \eqref{eq:Htmidt1} and \eqref{eq:Htmidt2}, we have that\footnote{We remark that in spite of the equality in \eqref{eq:Htmidt4}, estimating the integral of $\frac{\tilde{\theta}(y) - \tilde{\theta}(x)}{y-x}$ is more convenient than estimating $\int_0^{x_0} \frac{\tilde{\theta}(y)}{y-x} \, dy$ directly, because the latter integral hides some cancellation effect of the integrals before and after $x_0$.}
\begin{align}
	-\pi \cdot K\tilde{H}(x) & \approx 1+ \int_0^{x_0} \frac{\tilde{\theta}(y) - \tilde\theta(x)}{y-x} dy - \tilde\theta(x) \cdot \left(\log x - \log (x_0 - x)  \right) \nonumber \\
	& \approx 1+ \int_0^{x_0} \frac{\tilde{\theta}(y) - \tilde\theta(x)}{y-x} dy + \tilde{\theta}(x) \cdot \log \frac{1}{x}.\label{tmp:integxk>0}
\end{align}
By the estimate \eqref{eq:bdthetader} and the monotonicity of $\tilde\theta(\cdot)$, we have
\begin{align*}
	\int_0^x \frac{\tilde{\theta}(y) - \tilde\theta(x)}{y-x} dy & = \int_0^{x/4} \frac{\tilde{\theta}(x) - \tilde\theta(y)}{x-y} dy + \int_{x/4}^x \frac{\tilde{\theta}(x) - \tilde\theta(y)}{x-y} dy \\ 
	& \leq \int_0^{2^{-\ell-1}} \frac{\tilde{\theta}(x) - \tilde\theta(y)}{x-y} dy + \sup_{[x/4,x]} \tilde\theta' \cdot x \\
	& \lesssim 1 + \sum_{i=\ell+2}^\infty \int_{2^{-i}}^{2^{-i+1}} \frac{\tilde\theta(2^{-\ell+1})  }{2^{-\ell} - 2^{-i+1} } dy \\
	& = 1 + \sum_{i=\ell+2}^\infty  \frac{\tilde\theta(2^{-\ell+1})  }{2^{i-\ell} - 2 } \\
	& \lesssim 1+ \tilde\theta(2^{-\ell+1}),
\end{align*}
and
\begin{align*}
	\int_x^{x_0} \frac{\tilde{\theta}(y) - \tilde\theta(x)}{y-x} dy & = \int_x^{4x} \frac{\tilde{\theta}(y) - \tilde\theta(x)}{y-x} dy + \int_{4x}^{x_0} \frac{\tilde{\theta}(y) - \tilde\theta(x)}{y-x} dy \\
	& \leq \sup_{[x,4x]} \tilde\theta' \cdot 3x + \int_{4x}^{x_0} \frac{\tilde{\theta}(y) }{y-x} dy \\
	& \lesssim \frac{1}{x} \cdot 3x + \int_{2^{-\ell+2}}^{2^{-k_0+1}}\frac{\tilde{\theta}(y) }{y-x} dy \\
	& = 3 + \sum_{i=k_0 }^{\ell-2 } \int_{2^{-i}}^{2^{-i+1}} \frac{\tilde{\theta}(y) }{y-x} dy \\
	& \lesssim 1+ \sum_{i=k_0 }^{\ell-2 } \tilde\theta(2^{-i+1}) \cdot \frac{2^{-i}}{2^{-i}-2^{-\ell+1}} \\
	& \lesssim 1+ \sum_{i=k_0 }^{\ell-2 } \tilde\theta(2^{-i+1}).
\end{align*}
Hence
\begin{equation}\label{tmp:integxk>02}
	\int_0^{x_0} \frac{\tilde{\theta}(y) - \tilde\theta(x)}{y-x} dy \lesssim 1 + \sum_{i=k_0 }^{\ell } \tilde\theta(2^{-i+1}).  
\end{equation} 
Notice that \eqref{tmp:integxk>02} is similar to the estimate in \eqref{tmp:UBintheta} (modulo adding a constant). Hence by a similar argument to \eqref{tmp:integxk<0}, we can show that
\begin{align*}
	\int_0^\epsilon \exp\left(-ca_k K\tilde{H}(x) \right) dx & \lesssim \sum_{\ell =N_0}^\infty 2^{-\ell \left( 1- \frac{ca_k}{\pi} \right)} \exp \left(\frac{ca_k}{\pi} \sum_{i=k_0}^{\ell} \tilde\theta(2^{-i+1}) \right) \\
	& \lesssim \sum_{\ell=N_0}^\infty \exp \left(\frac{ca_k}{\pi} \sum_{i=k_0}^{\ell} \tilde\theta(2^{-i+1}) - \ell \left( 1- \frac{ca_k}{\pi} \right) \log 2 \right).
\end{align*} 
As in \eqref{def:x0forinteg}, for any $\beta \in (0,1)$ fixed, by choosing $x_0$ smaller if necessary, we can guarantee that
\begin{equation}\label{def:x0forinteg-2}
	\theta(8x_0) \leq (1-\beta) \log 2 < (1-\beta) \frac{1-\frac{ca_k}{\pi}}{\frac{ca_k}{\pi}} \log 2, \quad \text{ for every } k. 
\end{equation} 
Therefore it follows that
\begin{equation}\label{eq:integ0e>0}
	\int_0^\epsilon \exp\left(-ca_k K\tilde{H}(x) \right) dx \lesssim \sum_{\ell=N_0}^\infty \exp \left(-\ell\cdot \beta\left( 1- \frac{ca_k}{\pi} \right) \log 2 \right) \lesssim |\epsilon|^{\beta'} < +\infty, 
\end{equation} 
where $0<\beta'<\beta$.

Next we want to show that
\[ \frac{1}{\epsilon} \int_0^\epsilon \exp\left(-ca_k K\tilde{H}(x) \right) dx \gtrsim \frac{1}{\epsilon} \int_0^\epsilon \exp\left(\frac{ca_k}{\pi} \int_{2x}^{x_0} \frac{\tilde\theta(y) - \tilde\theta(x)}{y-x} dy  \right) dx \to +\infty \]
as $\epsilon \to 0+$.
By \eqref{tmp:dyadicscale} and the monotonicity of $\tilde\theta(\cdot)$, we have
\begin{align}
	\int_{2x}^{x_0} \frac{\tilde\theta(y) - \tilde\theta(x)}{y-x} dy \geq \int_{2^{-\ell+2} }^{2^{-k_0}} \frac{\tilde\theta(y) - \tilde\theta(x)}{y-x} dy & = \sum_{i=k_0-1}^{\ell-2} \int_{2^{-i}}^{2^{-i+1}} \frac{\tilde\theta(y) - \tilde\theta(x)}{y-x} dy \nonumber \\
	& \geq \sum_{i=k_0-1}^{\ell-2} \frac{\tilde\theta(2^{-i}) - \tilde\theta(2^{-\ell+1}) }{2^{-i+1} - 2^{-\ell} } \cdot 2^{-i} \nonumber \\
	& \geq \frac12 \sum_{i=k_0-1}^{\ell-2} \left[ \tilde\theta(2^{-i}) - \tilde\theta(2^{-\ell+1}) \right].\label{tmp:LBinttheta-2}
\end{align}
Since $\tilde\theta(\cdot)$ is an increasing function, we remark that $\sum_{i=k_0-1}^{\ell-2} \left[ \tilde\theta(2^{-i}) - \tilde\theta(2^{-\ell+1}) \right]$ increases as $\ell$ increases.
Hence
\begin{align}
	& \frac{1}{\epsilon} \int_0^\epsilon \exp\left(\frac{ca_k}{\pi} \int_{2x}^{x_0} \frac{\tilde\theta(y) - \tilde\theta(x)}{y-x} dy  \right) dx \nonumber \\
	& \quad \geq \frac{1}{\epsilon}  \int_0^{2^{-N_0}} \exp\left(\frac{ca_k}{\pi} \int_{2x}^{x_0} \frac{\tilde\theta(y) - \tilde\theta(x)}{y-x} dy  \right) dx \nonumber \\
	& \quad \gtrsim \frac{1}{\epsilon}  \sum_{\ell=N_0-1}^\infty \int_{2^{-\ell}}^{2^{-\ell+1}} \exp\left( \frac{ca_k}{2\pi} \sum_{i=k_0-1}^{\ell-2} \left[ \tilde\theta(2^{-i}) - \tilde\theta(2^{-\ell+1}) \right] \right) dx \nonumber \\
	& \quad \geq \exp\left(\frac{ca_k}{2\pi} \sum_{i=k_0-1}^{N_0-3} \left[ \tilde\theta(2^{-i}) - \tilde\theta(2^{-N_0+2}) \right] \right) \cdot \frac{1}{\epsilon}  \sum_{\ell=N_0-1}^\infty 2^{-\ell} \nonumber \\
	& \quad \approx \exp\left(\frac{ca_k}{2\pi} \sum_{i=k_0-1}^{N_0-3} \left[ \tilde\theta(2^{-i}) - \tilde\theta(2^{-N_0+2}) \right] \right).\label{tmp:integxklb}
\end{align}
Moreover,
\begin{align}
	\sum_{i=k_0-1}^{N_0-3} \left[ \tilde\theta(2^{-i}) - \tilde\theta(2^{-N_0+2}) \right] \gtrsim \sum_{i=k_0-1}^{N_0-3} \int_{2^{-i+1}}^{2^{-i}} \frac{\tilde\theta(x)- \tilde\theta(2^{-N_0+2})}{x} dx & = \int_{2^{-N_0+2}}^{2^{-k_0+1}}  \frac{\tilde\theta(x)- \tilde\theta(4\epsilon)}{x} dx \nonumber \\
	& \geq \int_{4\epsilon}^{x_0} \frac{\tilde\theta(x)-\tilde\theta(4\epsilon)}{x} dx.\label{tmp:integxklb2}
\end{align}
For each $\epsilon>0$, we denote the positive-valued function $h_\epsilon(x)$ as
\[ h_\epsilon(x) := \frac{\tilde\theta(x)-\tilde\theta(4\epsilon)}{x} \chi_{\{x\geq 4\epsilon\}}. \]
Then for each $x>0$, we have 
\[ h_\epsilon(x) \to \frac{\tilde\theta(x)}{x}, \quad \text{ as } \epsilon \to 0+. \]
Hence Fatou's lemma implies that
\begin{equation}\label{tmp:integxklb3}
	\liminf_{\epsilon \to 0+} \int_0^{x_0} h_\epsilon(x) \, dx \geq \int_0^{x_0} \frac{\tilde{\theta}(x)}{x} \, dx \geq \int_0^{x_0} \frac{\theta(x)}{x} \, dx = +\infty. 
\end{equation} 
Combining \eqref{tmp:integxklb}, \eqref{tmp:integxklb2} and \eqref{tmp:integxklb3}, we conclude that
\[ \frac{1}{\epsilon} \int_0^\epsilon \exp\left(\frac{ca_k}{\pi} \int_{2x}^{x_0} \frac{\tilde\theta(y) - \tilde\theta(x)}{y-x} dy  \right) dx \to +\infty, \quad \text{ as } \epsilon \to 0+. \]

To complete the proof that $\partial D$ is $C^1$-regular, we also claim that
\begin{equation}\label{eq:argx=0}
	\arg \frac{\Phi(\epsilon) - \Phi(0)}{\epsilon} \to f(0) = 0, \quad \text{ as } \epsilon \to 0. 
\end{equation} 
By the definition of $\Phi$, we have
\begin{align}
	\frac{\Phi(\epsilon) - \Phi(0)}{\epsilon} = \frac{1}{\epsilon} \int_0^\epsilon G(x) & = \frac{1}{\epsilon} \int_0^\epsilon \exp\left(-Kf(x) \right)\exp \left( i f(x) \right) \nonumber \\
	& = \exp(if(0)) \cdot \frac{1}{\epsilon} \int_0^\epsilon \exp\left(-Kf(x) \right)\exp \left( i \alpha(x) \right),\label{tmp:derx=0}
\end{align}
where $\alpha(x) := f(x) - f(0) \to 0$ as $|x| \to 0$.
As in \eqref{eq:Phiderxk2}, if $\exp(-Kf(x))$ is integrable on $[0,\epsilon]$, then
\[ \dfrac{\int_0^\epsilon \exp\left(-Kf(x) \right)\exp \left( i \alpha(x) \right)}{\int_0^\epsilon \exp\left(-Kf(x) \right) } \to 1\quad \text{ as } \epsilon \to 0. \]
Combined with \eqref{tmp:derx=0}, this implies \eqref{eq:argx=0}. Hence it suffices to show that
\begin{equation}\label{claim:integx=0}
	\exp\left(-Kf(x) \right) = \exp\left(-c\sum_k a_k K\tilde{H}(x-x_k) \right) \text{ is integrable on } [0,\epsilon]. 
\end{equation}

For $|x| \ll 1$, combining the estimates of $K\tilde{H}(x)$ in \eqref{eq:Kfx<0}, \eqref{tmp:logleft0} (when $x<0$) and in \eqref{eq:Htmidt1}, \eqref{eq:Htmidt2}, \eqref{tmp:logright0} (when $x>0$), we have that
\begin{align*}
	-\pi \cdot K\tilde{H}(x) \leq C + \log \frac{1}{|x|}.
\end{align*}
Hence
\begin{align*}
	\exp\left(-c\sum a_k K\tilde{H}(x-x_k) \right) & \leq \exp\left(C \sum \frac{ca_k}{\pi} \right) \cdot \exp \left( \sum \frac{ca_k}{\pi} \log \frac{1}{|x-x_k|}  \right) \\
	& \lesssim \prod |x-x_k|^{-\frac{ca_k}{\pi}},
\end{align*}
where the constant only depends on the upper bound of $c'$.
Therefore to prove \eqref{claim:integx=0}, it suffices to show that $\prod |x-x_k|^{-\frac{ca_k}{\pi}}$ is integrable near the origin. The latter is indeed the case when we choose $x_k = 2^{-k}$, and we postpone its proof to the appendix.
Therefore the claim \eqref{eq:argx=0} is proven.

Finally, let $\omega$ denote the harmonic measure of $D$ with pole at infinity. As in Section \ref{sec:Lip}, we have that 
\[ d\omega(z) 
= \frac{1}{\left| \Phi'(\Phi^{-1}(z)) \right|} dz, \]
and moreover
\[ \frac{d\omega}{d\sigma}(\Phi(x_k)) = \lim_{\Delta \to \Phi(x_k)} \frac{\omega(\Delta)}{\mathcal{H}^1(\Delta)}
 = 0\quad \text{ for every } k. \]
Therefore
\begin{align*}
	\left\{z\in \partial D: \lim_{\Delta \to z} 
	\frac{\omega(\Delta)}{\mathcal{H}^1(\Delta)} \text{ exists and is equal to }  
	 0  \right\} \setminus \{\Phi(0)\} 
	= 
	\Phi\left(\{x_k: k\in \NN\} \right). 
\end{align*} 
We remark that since $\partial D$ is $C^1$-regular, it is in particular Ahlfors regular, i.e. there are uniform constants $0<C_1\leq C_2$ such that
\[ C_1 r\leq  \mathcal{H}^1(\Delta_r(z)) \leq C_2 r, \quad \text{ for every }z\in \partial D \text{ and } r>0. \] 
Hence
\[ \lim_{\Delta \to z} \frac{\omega(\Delta)}{\mathcal{H}^1(\Delta)} \text{ exists and is equal to } 0 \iff  \lim_{r \to 0} \frac{\omega(\Delta_r(z))}{r} \text{ exists and is equal to } 0.  \]
This finishes the proof of Theorem \ref{thm:main} for the harmonic measure with pole at infinity.

Let $\omega$ (resp. $\omega^\infty$) denote the harmonic measure in $D$ with pole at $X \in D$ (resp. with pole at $\infty$), and let $G(X, \cdot)$ (resp. $G(\infty, \cdot)$) denote the Green's function of the Laplacian in $D$ with pole at $X$ (resp. with pole at $\infty$). By applying the comparison principle in Lemma \ref{lm:comparison} to $G(X, \cdot)$ and $G(\infty, \cdot)$, we have that
\[ G(X, \cdot) \approx G(\infty, \cdot), \]
as long as we are $\dist(X, \partial D)/2$-close to the boundary. By Lemma \ref{lm:CFMS}, it follows that
\[ \omega(\Delta_r(p)) \approx \omega^\infty(\Delta_r(p)), \]
as long as $X \notin B_{2r}(p)$. Therefore
\begin{align*}
	& \left\{ p\in \partial D: \lim_{r\to 0} \frac{\omega(\Delta_r(p))}{r} \text{ exists and is equal to } 0 \right\} \\
	& \qquad = \left\{ p\in \partial D: \lim_{r\to 0} \frac{\omega^\infty(\Delta_r(p))}{r} \text{ exists and is equal to } 0 \right\} \supset \Phi\left(\{ x_k: k\in \NN\} \right). 
\end{align*} 
This finishes the proof of Theorem \ref{thm:main}.\qed

\appendix
\renewcommand{\theequation}{A.\arabic{equation}}
\section{}

\begin{lemma}
	Assume that $\sum b_k < 1/2$ and $x_k = 2^{-k}$. The function	
	\[ g(x) = \prod |x-x_k|^{-b_k} \]
	is integrable near the origin. Moreover,
	\[ \int_{-\epsilon}^\epsilon g(x) \, dx \lesssim \epsilon^{1-\sum b_k}. \]
\end{lemma}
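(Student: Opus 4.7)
The plan is to carry out a dyadic decomposition of a small neighborhood of $0$ and to bound $g$ pointwise on each dyadic piece by splitting the indices $k$ into a ``far'' regime (where $|x-x_k|$ is comparable to a fixed dyadic scale) and a ``near'' regime (where genuine singularities occur).

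It suffices to treat $\epsilon$ of the form $\epsilon = 2^{-M}$, and by linearity of the bound we can handle $(0,\epsilon]$ and $[-\epsilon,0)$ separately. Write $B := \sum_k b_k < 1/2$. On the positive side, decompose
\[
  \int_0^{2^{-M}} g(x)\,dx = \sum_{N \ge M} \int_{I_N} g(x)\,dx, \qquad I_N := (2^{-N-1},\,2^{-N}].
\]
For $x \in I_N$, examine each factor $|x-x_k|=|x-2^{-k}|$ according to the position of $k$ relative to $N$:
\begin{itemize}
  \item If $k \le N-2$, then $x_k \ge 4 \cdot 2^{-N} \ge 4x$, so $|x-x_k| \asymp 2^{-k}$ and $|x-x_k|^{-b_k} \asymp 2^{k b_k}$.
  \item If $k \ge N+2$, then $x_k \le 2^{-N-2} \le x/2$, so $|x-x_k| \asymp 2^{-N}$ and $|x-x_k|^{-b_k} \asymp 2^{N b_k}$.
  \item If $k = N-1$, then $|x-x_k| = 2^{-N+1}-x \asymp 2^{-N}$, so $|x-x_k|^{-b_{N-1}} \asymp 2^{N b_{N-1}}$.
  \item If $k \in \{N, N+1\}$, the factor $|x-x_k|^{-b_k}$ can genuinely blow up, and must be kept.
\end{itemize}
Collecting these,
\[
  g(x) \;\lesssim\; D_N \cdot |x - 2^{-N}|^{-b_N}\,|x-2^{-N-1}|^{-b_{N+1}}, \qquad x\in I_N,
\]
where the ``distant constant'' is $\log_2 D_N = \sum_{k\le N-2} k b_k + N\sum_{k\ge N+2}b_k + N b_{N-1}$.

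The key algebraic identity is
\[
  NB - \log_2 D_N \;=\; \sum_{k \le N-2}(N-k)\, b_k + N b_N + N b_{N+1} \;\ge\; N(b_N+b_{N+1}),
\]
so $D_N \le 2^{NB - N(b_N + b_{N+1})}$. Meanwhile, the two-sided singular factor integrates via a Beta function: substituting $u = x - 2^{-N-1}$,
\[
  \int_{I_N} |x-2^{-N}|^{-b_N}|x-2^{-N-1}|^{-b_{N+1}}\,dx
  = (2^{-N-1})^{1-b_N-b_{N+1}}\, B(1-b_{N+1},\,1-b_N),
\]
which is bounded above by $C \cdot 2^{-N(1-b_N-b_{N+1})}$ since $b_N, b_{N+1} < 1/2$.

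Multiplying these two bounds,
\[
  \int_{I_N} g(x)\,dx \;\lesssim\; 2^{NB - N(b_N+b_{N+1})} \cdot 2^{-N(1-b_N-b_{N+1})} \;=\; 2^{-N(1-B)},
\]
and summing the resulting geometric series over $N \ge M$ yields $\int_0^{\epsilon} g \lesssim 2^{-M(1-B)} = \epsilon^{1-B}$. For the left side $x\in[-\epsilon,0)$ the analysis is simpler because all the $x_k$ are positive, so $|x - x_k| = 2^{-k} + |x|$ has no singularities: a direct two-regime split (at $k \approx N$) gives the pointwise bound $g(x) \lesssim 2^{NB}$ on $(-2^{-N}, -2^{-N-1}]$, and summing $2^{-N} \cdot 2^{NB}$ again gives $\epsilon^{1-B}$.

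The main obstacle is purely bookkeeping: tracking exactly which indices $k$ belong to the ``far'' and ``near'' regimes and verifying the telescoping identity $\log_2 D_N \le NB - N(b_N+b_{N+1})$, which is precisely what makes the pointwise blow-up at the singularities $x_N, x_{N+1}$ integrable against the geometric growth of $D_N$ in $N$ and ultimately produces the sharp exponent $1-B$.
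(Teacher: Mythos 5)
Your proposal is correct and follows essentially the same route as the paper: a dyadic decomposition of $(0,\epsilon]$, isolating the two nearest singular factors $|x-x_N|^{-b_N}|x-x_{N+1}|^{-b_{N+1}}$ on each dyadic interval, bounding the remaining factors by a constant of size $2^{N(B-b_N-b_{N+1})}$ (your telescoping identity is the same bookkeeping the paper does via \eqref{eq:remove}), integrating the two-sided singularity (you via the Beta function, the paper by splitting at the midpoint), and summing the geometric series; the left side is handled by the elementary bound $|x-x_k|\geq|x|$ in the paper versus your equivalent dyadic version. No gaps.
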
	
\begin{proof}
	Let $\epsilon>0$ be sufficiently small. We first prove that
	\[ \int_{-\epsilon}^0 g(x) \, dx \leq 2 \epsilon^{1-\sum b_k} < +\infty. \]
	For each $x\in (-\epsilon, 0)$ and $k\in \NN$, we have
	\[ |x-x_k| = x_k - x \geq |x|. \]
	Hence
	\begin{align*}
		\int_{-\epsilon}^0 g(x) \, dx \leq \int_{-\epsilon}^0 |x|^{-\sum b_k} \, dx = \frac{\epsilon^{1-\sum b_k}}{1-\sum b_k} < +\infty.
	\end{align*}
	To estimate $\int_0^{\epsilon} g(x) dx$, we assume $k_0 \in \NN$ is such that $2^{-k_0} \leq \epsilon < 2^{-k_0+1}$. Then
	\[ \int_0^\epsilon g(x) \, dx \leq \int_0^{2^{-k_0+1}} g(x) \, dx = \sum_{i=k_0 }^\infty \int_{2^{-i}}^{2^{-i+1}} g(x) \, dx. \]
	Let $x\in [2^{-i}, 2^{-i+1}]$ be arbitrary. For every $k\geq i+1$, we have
	\[ |x-x_k| \geq 2^{-i} - 2^{-(i+1)} = 2^{-i-1}; \]
	for every $k\leq i-2$, we have
	\[ |x-x_k| \geq 2^{-(i-2)} - 2^{-i+1} = 2^{-i+1}. \]
	Hence
	\begin{equation}\label{eq:remove}
		\prod_{k\neq i, i-1} |x-x_k|^{-b_k} \leq \left( 2^{-i-1} \right)^{-\sum_{k\geq i+1} b_k} \cdot \left(2^{-i+1} \right)^{-\sum_{k\leq i-2} b_k}. 
	\end{equation} 
	It remains to estimate
	\[ \int_{2^{-i}}^{2^{-i+1}} |x-x_i|^{-b_i} |x-x_{i-1}|^{-b_{i-1}} \, dx. \]
	To that end, let $c_i$ denote the midpoint of the interval $[2^{-i}, 2^{-i+1}]$. Then
	\begin{align}
		\int_{2^{-i}}^{c_i} |x-x_i|^{-b_i} |x-x_{i-1}|^{-b_{i-1}} \, dx & \leq \left( 2^{-i-1} \right)^{-b_{i-1}} \cdot \int_{0}^{2^{-i-1}} t^{-b_i} \, dx \nonumber \\
		& = \left( 2^{-i-1} \right)^{-b_{i-1}} \cdot \frac{(2^{-i-1})^{1-b_i}}{1-b_i}.\label{eq:left}
	\end{align}
	Similarly, 
	\begin{align}
		\int_{c_i}^{2^{-i+1}} |x-x_i|^{-b_i} |x-x_{i-1}|^{-b_{i-1}} \, dx & \leq \left( 2^{-i-1} \right)^{-b_{i}} \cdot \int_{0}^{2^{-i-1}} t^{-b_{i-1}} \, dx \nonumber \\
		& = \left( 2^{-i-1} \right)^{-b_{i}} \cdot \frac{(2^{-i-1})^{1-b_{i-1}}}{1-b_{i-1}}.\label{eq:right}
	\end{align}
	Combining \eqref{eq:remove}, \eqref{eq:left} and \eqref{eq:right}, we obtain
	\begin{align*}
		\int_{2^{-i}}^{2^{-i+1}} g(x) \, dx \leq 4 \cdot (2^{-i-1})^{1-\sum b_k}.
	\end{align*}
	Therefore
	\begin{align*}
		\int_0^\epsilon g(x) \, dx \leq \sum_{i=k_0}^\infty \int_{2^{-i}}^{2^{-i+1}} g(x) \, dx \leq 4 \sum_{i=k_0}^\infty (2^{-i-1})^{1-\sum b_k} \lesssim \epsilon^{1-\sum b_k}.
	\end{align*}
\end{proof}

\end{document}